\documentclass[11pt]{article}

\usepackage{epsf,amssymb,latexsym,amsmath}
\usepackage{graphicx}
\usepackage{tikz}
\usetikzlibrary{arrows}

\newcommand{\proof}{\noindent{\bf Proof.\ }}
\newcommand{\qed}{\hfill $\square$ \bigskip}

\newtheorem{theorem}{\bf Theorem}[section]

\newtheorem{lemma}[theorem]{\bf Lemma}

\newtheorem{conjecture}[theorem]{\bf Conjecture}

\def\max {\mathop{\rm max}\nolimits}

\begin{document}

\title{Hamiltonicity of Cartesian products of trees with odd paths }

\author{
    Irena Hrastnik Ladinek\thanks{University of Maribor, FME, Maribor, Slovenia.} \and 
     Tja\v sa Paj Erker\footnotemark[1] \and
   Simon \v Spacapan\footnotemark[1] \footnote{IMFM, Jadranska 19, 1000 Ljubljana.} \footnote{The author is supported by ARIS program P1-0297.}}
\date{\today}

\maketitle

\begin{abstract}
 A  $\{P_2,P_3\}$-factor in a graph $G$ is a factor of $G$ in which every component  is a path on   two or three vertices. Let $T\Box P_n$ be the Cartesian product of a tree $T$ and a path on $n$ vertices. 
Kao and Weng \cite{kao} proved  that $T\Box P_n$ is hamiltonian if $T$ has a path factor  and $n$ is a sufficiently large even integer. 
In this article we prove that, for every odd $n$, there exists a tree $T$ of maximum degree 4 that has a $\{P_2,P_3\}$-factor  such that $T\Box P_n$ is not hamiltonian, 
thereby refuting a conjecture given in \cite{kao}.

\end{abstract}

\noindent
{\bf Key words}:  Cartesian product, hamiltonian graph.

\bigskip\noindent
{\bf AMS subject classification (2020)}: 05C45,  05C76.


\bigskip

\section{Introduction}
There exist several degree conditions which guarantee hamiltonicity of a graph. One of the most widely known is Dirac's condition \cite{dirac}, which  states that 
a graph of order $n$ and minimum degree at least $n/2$ is hamiltonian. Several generalizations of this result were obtained later; see \cite{bondy,chavatal,fan,ore}. 

Contrary to the (rather natural and intuitive)  restrictions on minimum degree of a graph, 
 hamiltonicity  of Cartesian products 
depends on the maximum degrees and the orders of  factors.  Batagelj and Pisanski \cite{batagelj} characterized hamiltonian products $T\Box C_n$, where $T$
is a tree and $C_n$ is a cycle on $n$ vertices. 

\begin{theorem}\cite{batagelj} \label{batagelj}
Let $T$ be a tree of maximum degree $\Delta$. Then $T\Box C_n$ is hamiltonian if and only if $n\geq \Delta$. 
\end{theorem}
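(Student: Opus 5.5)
Both directions can be handled directly, treating the product as $|V(T)|$ copies of $C_n$ (the fibers ${v}\times C_n$) joined by perfect matchings along the edges of $T$.

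\emph{Necessity.} Let $v$ be a vertex of degree $\Delta$, and let $T_1,\dots,T_\Delta$ be the components of $T-v$. Deleting the fiber $\{v\}\times C_n$ (which has $n$ vertices) leaves the $\Delta$ components $T_i\Box C_n$. A hamiltonian graph is $1$-tough, so removing a set $S$ leaves at most $|S|$ components. Hence $\Delta\le n$.

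\emph{Sufficiency.} Assume $n\ge\Delta$. I would use induction on $|V(T)|$ and prove a stronger statement that survives the gluing. The property I would carry is:
\begin{quote}
$T\Box C_n$ has a Hamiltonian cycle $H$ such that, for every leaf $u$ of $T$, $H$ uses at least one edge $(u,i)(u,i+1)$ of the leaf fiber.
\end{quote}
Unfortunately the induction does not keep $\Delta$ small, so this property is not enough on its own. The refined hypothesis is the following. Fix a root $r$. For each vertex $v$ with parent $p$, the cycle, restricted to the fiber of $v$, omits certain fiber edges $e_j=(v,j)(v,j+1)$. These omitted edges are where the neighbouring fibers get spliced in. Splicing a child $c$ into $v$ at index $j$ works as follows: take a cycle through the fiber of $c$ that uses the edge $(c,j)(c,j+1)$, remove that edge and the edge $(v,j)(v,j+1)$, and add the two rungs $(v,j)(c,j)$ and $(v,j+1)(c,j+1)$. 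Each splice consumes one fiber edge of $v$. So the plan is to assign to the children and the parent of each vertex $v$ pairwise distinct indices in $\mathbb{Z}_n$. This is possible because $v$ has at most $\Delta\le n$ neighbours.

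Concretely, I would start from the cycle on $\{r\}\times C_n$. I would then process $T$ by BFS. When a vertex $c$ with parent $v$ is first attached at index $j$, its fiber is inserted as the full cycle $\{c\}\times C_n$ minus the edge $(c,j)(c,j+1)$. This is a Hamiltonian path of the fiber of $c$ from $(c,j+1)$ around to $(c,j)$. The splice replaces the edge $(v,j)(v,j+1)$ of the current cycle by this path together with the two rungs. For this to be legal, the edge $(v,j)(v,j+1)$ must still be in the current cycle, which holds precisely when index $j$ has not been used at $v$ before. In the fiber of $c$, the cycle now contains all fiber edges except the one at $j$. So $c$'s own children must receive indices from $\mathbb{Z}_n\setminus\{j\}$, all distinct. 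That leaves $n-1\ge\Delta-1\ge\deg(c)-1$ indices, which is exactly the number of children of $c$. At the root, all $n\ge\Delta\ge\deg(r)$ indices are available.

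After all splices the result is a single cycle through every vertex, since each splice merges a new fiber into the existing cycle. A splice at $c$ never disturbs previously used edges, because each splice removes only an unused fiber edge of the parent.

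\emph{Main obstacle.} The main thing to verify is that the two rungs $(v,j)(c,j)$ and $(v,j+1)(c,j+1)$ are genuine product edges, which they are, and that indices are never reused at a vertex. The counting $\deg(c)-1\le n-1$ for non-root vertices and $\deg(r)\le n$ at the root handles the latter. The edge case $n\ge3$ is needed for $C_n$ to be a cycle. The degenerate case where $T$ is a single vertex is trivial.
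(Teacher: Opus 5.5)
Your proposal is correct. The sufficiency half is exactly the construction the paper sketches in its introduction: take the cycle in each fiber $\{x\}\Box C_n$ and splice adjacent fibers together, using distinct splice positions per fiber, which $\Delta\le n$ permits. Your explicit index bookkeeping ($\deg(r)\le n$ at the root, $\deg(c)-1\le n-1$ elsewhere) makes that sketch rigorous. The abandoned ``leaf-fiber'' induction paragraph is never used and can be deleted.

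For necessity the paper only cites Batagelj--Pisanski, and its Theorem~\ref{cucek} would give only $\Delta\le n+1$. Your $1$-toughness argument, deleting the fiber of a maximum-degree vertex to leave the $\Delta$ components $T_i\Box C_n$, is the standard way to obtain the sharp bound.
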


For  product of a tree $T$ with an even path $P_n$ a   
 sufficient condition for hamiltonicity of  $T \Box P_n$   is given in \cite{kao}. 
If $T\Box P_n$ is hamiltonian, then $T$ has a $\{P_2,P_3\}$-factor (see \cite{spa}), and hence any sufficient condition 
for hamiltonicity of $T\Box P_n$ must assume that $T$ has a $\{P_2,P_3\}$-factor. 
 The condition given in \cite{kao} bounds maximum degree $\Delta(T)$  in terms of $n$.

\begin {theorem}  \cite{kao}\label{kitajci}
For any tree  $T$  the Cartesian product $T \Box P_n$ is hamiltonian if one of the following holds:
\begin{itemize}
\item[(a)] $T$ has a perfect matching and $n\geq \Delta(T)$
\item[(b)] $T$ has a  $\{P_2,P_3\}$-factor, $n$ is an even integer, and $n\geq 4\Delta (T)-2$.
\end{itemize}
\end{theorem}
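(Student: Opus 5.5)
Part (a) follows from Theorem~\ref{batagelj}, part (b) needs a construction built from the $\{P_2,P_3\}$-factor.

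\emph{Part (a).} Let $M$ be a perfect matching of $T$. For each edge $uv\in M$, the subgraph $\{u,v\}\Box P_n$ is a ladder, and its boundary is a Hamilton cycle $C_{uv}$. These cycles are vertex-disjoint and together cover $T\Box P_n$. Contract each matching edge: $T/M$ is a tree, and each vertex of $T/M$ has degree at most $2(\Delta-1)$. I will not apply Theorem~\ref{batagelj} directly. Instead I will merge the cycles $C_{uv}$ one at a time along the tree $T/M$, working outward from a root.

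\emph{Merging step.} Suppose $xy\in E(T)\setminus M$ joins the ladders of two matching edges. Choose an index $i$ such that both $(x,i)(x,i+1)\in C_{xx'}$ and $(y,i)(y,i+1)\in C_{yy'}$ are side edges of the two ladders. Delete these two edges and add $(x,i)(y,i)$ and $(x,i+1)(y,i+1)$; this joins the two cycles into one.

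Each vertex $x$ has at most $\Delta-1$ non-matching neighbours. Each merge at $x$ uses one rung-pair $\{i,i+1\}$ of the side edges along $x$. These pairs must be distinct. They must also avoid the rungs at the two ends, which the ladder cycle uses as its closing edges. There are $n-1$ consecutive pairs along $x$, and they can be chosen pairwise disjoint by taking $i\in\{1,3,5,\dots\}$. So we need roughly $2(\Delta-1)+1\le n$, which is not quite the bound $n\ge\Delta$.

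To reach $n\ge\Delta$, use overlapping pairs $\{i,i+1\}$, $i=1,\dots,n-1$. Overlap is fine as long as the two merges at $x$ delete different edges, i.e.\ different values of $i$. This gives $n-1\ge \Delta-1$ available slots, and the partner vertex $y$ must use the same $i$. So the real constraint is an edge-colouring of the non-matching edges of $T$ by indices $1,\dots,n-1$ in which edges at a common vertex receive distinct colours. A tree is bipartite, so this needs only $\Delta-1$ colours, and that fits exactly when $n\ge\Delta$.

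Merges made this way never destroy each other's edges, because each one deletes a distinct side edge. After performing all of them we obtain a single Hamilton cycle, since the merge graph is the tree $T/M$.

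\emph{Part (b).} Take a $\{P_2,P_3\}$-factor of $T$. A $P_2$ component $uv$ gives the ladder cycle exactly as in (a). A $P_3$ component $u\,w\,v$ gives $P_3\Box P_n$, and since $n$ is even, $P_3\Box P_n$ has a Hamilton cycle. I fix a specific one, a comb/snake: go down column $u$, come back through columns $w$ and $v$ in a boustrophedon that sweeps rows in pairs. In this cycle, column $u$ and column $v$ each keep about $n/2$ side edges $(x,i)(x,i+1)$ with $i$ odd, and these are usable for merging. The middle vertex $w$ also has merge slots, but only through the rungs that are present, roughly $n/4$ usable positions. Non-matching tree edges are merged by the same swap, which now requires a common index $i$ at which both endpoints have a cycle edge $(x,i)(x,i+1)$.

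\emph{Main obstacle.} The hard step is the counting in (b). Each vertex has at most $\Delta$ incident non-factor edges, and they need distinct usable indices matching their partners. I would fix the snake so that endpoint columns use the odd $i$ and the middle column uses the $i\equiv 1\pmod 4$, or something similar. Then I would colour the non-factor edges greedily from the root of the contracted tree, so that at most $\Delta$ colours are forbidden at each step. The condition $n\ge 4\Delta-2$ should then guarantee a free common index, the worst case being a middle vertex with only about $n/4$ slots. Getting the slot patterns to align between a middle vertex and an endpoint vertex is the delicate part; it may require choosing, for each $P_3$, one of two mirror-image snakes.
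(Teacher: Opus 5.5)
Your part (a) is correct. Every vertical edge $(x,i)(x,i+1)$ lies on the boundary cycle of its ladder. A proper edge-colouring of the forest $T-M$ with colours in $\{1,\dots,n-1\}$ exists because that forest has maximum degree $\Delta-1\le n-1$. Such a colouring makes all 4-cycle swaps delete distinct edges, and the tree $T/M$ ensures each swap joins two different cycles. The detour about end rungs and disjoint pairs is unnecessary. The paper only quotes this theorem from Kao and Weng, and your overall strategy matches the one sketched in its introduction.

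The gap is in (b). The ``main obstacle'' you leave open is the entire content of the statement, and the remedies you suggest cannot work. In \emph{every} Hamilton cycle of $P_3\Box P_n$ with $n$ even, the middle column contains exactly the vertical edges $(w,i)(w,i+1)$ with $i$ even, $2\le i\le n-2$. To see this:
the corners force the cycle's edges in rows $1$ and $n$;
by Pick's theorem the cycle encloses $3n/2-1$ of the $2n-2$ unit squares;
each band of two squares between rows $i$ and $i+1$ contains an inside square;
bands $1$ and $n-1$ lie entirely inside;
no two outside squares lie in consecutive bands.
Hence the $n/2-1$ outside squares are exactly one in each even band.

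So a middle vertex can never use $i\equiv 1\pmod 4$. It always has $n/2-1$ slots for at most $\Delta-2$ non-factor edges, so it is not the bottleneck you take it to be. In your comb, the end column $v$ carries only odd-indexed vertical edges. If both ends of some $P_3$ are adjacent in $T$ to middle vertices of other $P_3$'s, one of these tree edges has no common index. Mirror images do not fix this. Reflecting top to bottom sends index $i$ to $n-i$, which preserves parity because $n$ is even. Swapping $u$ and $v$ just moves the defect to the other end.

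The missing idea is the correct slot structure. The Hamilton cycles of $P_3\Box P_n$ are obtained by choosing, independently for each even $j\in\{2,\dots,n-2\}$, which one of $(u,j)(u,j+1)$ and $(v,j)(v,j+1)$ is omitted. Both end columns always contain all $n/2$ odd-indexed vertical edges, and the $n/2-1$ even ones are split between them arbitrarily.

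With this, a greedy assignment from the root of the component tree works. Tree edges joining two path-ends (of $P_2$'s or $P_3$'s) get odd indices. Tree edges at a middle vertex get even indices. Afterwards, each $P_3$'s cycle is chosen so that every even index used at $u$ (resp.\ $v$) is present in that column. The binding constraint is that the two ends of a single $P_3$ may together need $2(\Delta-1)$ distinct even indices out of only $n/2-1$. The condition $n/2-1\ge 2\Delta-2$ is exactly $n\ge 4\Delta-2$. Without this structural fact and count, your part (b) is a heuristic, not a proof.
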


In \cite{spa} the authors proved that the bound  $n\geq 4\Delta (T)-2$ in (b) is best possible. 
Both theorems above assert the following:  if maximum degree of one factor of $G\Box H$ is sufficiently small compared to the order of the other factor,  
 then $G\Box H$ is hamiltonian. It is interesting that  the converse is also  tacitly true --- as follows from Theorem \ref{cucek}.

Let $\nabla(G)$ be the minimum $k$ such that $G$ has a spanning tree of maximum degree $k$. In \cite{dimakopoulos} (see also \cite{klavzar}) the authors have proven  the following. 

\begin{theorem}\cite{dimakopoulos}\label{cucek}
Let $G$ and $H$ be connected graphs. If $G\Box H$ is hamiltonian, then $\nabla(G)\leq |V(H)|+1$ and $\nabla(H)\leq |V(G)|+1$. 
\end{theorem}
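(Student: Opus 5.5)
Since a Hamiltonian cycle of $G\Box H$ gives spanning trees of low degree in each factor, the plan is to extract one directly. By symmetry it suffices to show $\nabla(G)\leq |V(H)|+1$; the other inequality follows by exchanging the roles of $G$ and $H$. Write $m=|V(H)|$, let $C$ be a Hamiltonian cycle of $G\Box H$, and for each $u\in V(G)$ call $\{u\}\times V(H)$ the \emph{$H$-fiber} over $u$. Each edge of $C$ either lies inside an $H$-fiber (an $H$-edge) or joins $(u,h)$ to $(v,h)$ with $uv\in E(G)$ (a $G$-edge). Delete one edge of $C$ to obtain a Hamiltonian path $P$; deleting a $G$-edge is most convenient, and one exists because $C$ must leave each fiber when $|V(G)|\geq 2$.

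Next we contract each $H$-fiber to a single vertex. The $G$-edges of $P$ then project to edges of $G$, possibly with repetitions, and we let $M$ be the resulting multigraph on $V(G)$. Since $P$ is connected and spanning, $M$ is connected, so its underlying simple graph is a connected spanning subgraph of $G$. Moreover, $\deg_M(u)$ equals the number of $G$-edges of $P$ incident to the fiber over $u$. Each of the $m$ vertices of that fiber contributes at most $2$ such edges, so the crude bound is $\deg_M(u)\leq 2m$. This is too weak, so the degree count has to be refined.

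The refinement is the key step, and the plan is to prove it as a lemma. Restrict $P$ to the fiber over $u$; it splits into $k_u$ subpaths (segments), and every $G$-edge incident to the fiber sits at an endpoint of a segment. A segment has at most $2$ such incident edges, fewer if it contains an endpoint of $P$. Counting this way gives up to $2k_u$ edges, where $k_u\leq m$. That is still too many, but we do not need all of these edges. We want a spanning tree in which $u$ has small degree, and it is enough to keep, for each segment, only the edge by which $P$ leaves it, in the sense of ``first entrance into a new fiber''. Concretely, traverse $P$ from one end and, for each fiber, record only the $G$-edge through which $P$ first enters that fiber. These edges form a spanning tree of $M$: each non-initial fiber gets exactly one parent edge, pointing to an earlier-visited fiber, so the result is acyclic and connected. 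In this tree the degree of $u$ is at most $1$ (its own entry edge) plus the number of fibers first entered directly from the fiber over $u$. Each such first entrance leaves the fiber over $u$ at the end of a distinct segment, so there are at most $k_u\leq m$ of them. This gives $\deg(u)\leq m+1$.

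Mapping the tree edges back to distinct edges of $G$ yields a spanning tree of $G$ with maximum degree at most $m+1$, so $\nabla(G)\leq |V(H)|+1$. The main point to verify carefully is the counting in the last step: distinct fibers first entered from $u$'s fiber must correspond to distinct segment exits. This holds because each segment is left by at most one $G$-edge in the forward direction of $P$.
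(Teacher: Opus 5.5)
Your proof is correct. The paper does not prove this statement; it quotes it from \cite{dimakopoulos} as background. So there is no in-paper argument to compare yours against, and judged on its own your first-entrance construction is complete.

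Order the fibers by the time $P$ first reaches them. Every non-initial fiber then gets exactly one parent edge to an earlier fiber, so these $|V(G)|-1$ edges form a spanning tree of $G$. No edge $uv$ of $G$ is used twice, since it cannot be the parent edge of both $v$ and $u$: that would require each of $u,v$ to be reached before the other.

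Now let $v$ be a child of $u$, first reached at $p_t$. Then $p_{t-1}$ lies in the fiber over $u$ and $p_t$ does not, so $p_{t-1}$ is the last vertex of a segment. Distinct children have distinct $t$, hence distinct segments. This gives at most $k_u\le |V(H)|$ children and degree at most $|V(H)|+1$, as you claim. The second inequality follows from $G\Box H\cong H\Box G$.

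Some remarks on what your argument actually shows:
\begin{itemize}
\item It never uses the closing edge of $C$. So it proves the stronger statement that $\nabla(G)\le |V(H)|+1$ whenever $G\Box H$ has a Hamiltonian path, and it does not matter which edge of $C$ you delete.
\item Conceptually, you are proving a general fact: a spanning walk in $G$ that visits each vertex at most $k$ times yields, via first-entrance edges, a spanning tree of maximum degree at most $k+1$. You apply it to the projection of $P$ onto $G$, which (after suppressing repeated consecutive vertices) visits $u$ exactly $k_u$ times.
\item The multigraph $M$ and the intermediate idea of keeping ``the edge by which $P$ leaves each segment'' are not needed and can be dropped.
\item Set aside the trivial case $|V(G)|=1$ explicitly, since there no $G$-edge exists.
\end{itemize}
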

Hence hamiltonian   products $G\Box H$ have the property that the maximum degree of a spanning tree of $G$ is bounded by the order of $H$ (and vice versa).

 The proof of Theorem \ref{batagelj}
is based on the fact that $\{x\}\Box C_n$, which is isomorphic to $C_n$,   is a hamiltonian graph. To construct a Hamilton   cycle in $T\Box C_n$ we first construct a Hamilton  cycle in 
each $\{x\}\Box C_n$, where $x\in V(T)$. If $\Delta(T)\leq n $ we can then connect together all these cycles  to obtain a Hamilton cycle in 
$T\Box C_n$.  

If  $T$ has a  $\{P_2,P_3\}$-factorr and $n$ is even, we use a similar idea  to prove that $T\Box P_n$ is hamiltonian. 
Here it is  essential that both $P_2\Box P_n$ and $P_3\Box P_n$ are hamiltonian graphs, and 
to construct a Hamilton cycle in $T\Box P_n$ we start with a  $\{P_2,P_3\}$-factor $\displaystyle \cup G_i$  of $T$, where each $G_i$ is a $P_2$ or a $P_3$, and then we construct a Hamilton  cycle in each $G_i\Box P_n$. If $\Delta(T)$ is sufficiently small we can then connect all these cycles together to obtain a Hamilton cycle in $T\Box P_n$.

This approach does not work when $n$ is odd, for the simple reason that $P_3\Box P_n$ is not hamiltonian. As we shall see hamiltonicity of $G\Box P_n$ does not depend on 
maximum degree of $G$, contrary to the conjecture raised in \cite{kao}.

\begin{conjecture} \cite{kao}
Let $G$ be a graph with a  $\{P_2,P_3\}$-factor and $n\geq  4\Delta (G)-2$. If $G\Box P_n$ is balanced bipartite, then $G\Box P_n$ is hamiltonian. 
\end{conjecture}

In particular, if $n$ is odd, $T\Box P_n$ might be non-hamiltonian even if $n$ is arbitrarily large relative to $\Delta(T)$. 
In this article we refute the above conjecture by proving that for every odd integer $n$ there exists a graph $G$ of maximum degree four, that has  $\{P_2,P_3\}$-factor, such that for every odd  $i\leq n$ the graph $G\Box P_i$ is not hamiltonian.

\begin{theorem}\label{main}
For every positive   integer $n$    there exists a  balanced bipartite graph $G$  of    maximum degree four, that has a  $\{P_2,P_3\}$-factor, such that  $G\Box P_i$ is not hamiltonian for every odd $i\leq n$. 
\end{theorem}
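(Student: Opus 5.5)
Our plan is to build $G$ as a tree assembled from many copies of a small gadget, and to show that any Hamilton cycle of $G\Box P_i$ ($i$ odd) must behave in a rigid, forced way inside each gadget. A counting or parity argument then shows that these forced behaviours cannot all occur at once.

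The basic observation is this. Colour $G\Box P_i$ bipartitely. Since $i$ is odd, a fibre $\{x\}\Box P_i$ has one more vertex of one colour than of the other. Specifically, it has $(i+1)/2$ vertices of the colour of $(x,1)$ and $(i-1)/2$ of the other colour.

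Take a leaf $x$ of $G$ with neighbour $y$. In $G\Box P_i$ every vertex $(x,j)$ has degree at most $3$, and the cycle must pass through the whole fibre $\{x\}\Box P_i$. Consequently the edges between the $x$-fibre and the $y$-fibre used by the Hamilton cycle are severely restricted. By the colour imbalance, the number of "crossing" edges from the $x$-fibre into the $y$-fibre is at least $2$ and is controlled modulo the structure. In particular, the vertices $(x,1)$ and $(x,i)$ of degree $2$ force their incident edges.

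The gadget I would use is a $P_3$ whose centre $c$ has two pendant leaves attached through the factor structure. Concretely, take a vertex $c$ of degree $4$ adjacent to leaves $a, b$ and to two further vertices leading to other gadgets, with a $P_3$ $a\,c\,b$ in the factor. Then both leaf fibres $\{a\}\Box P_i$ and $\{b\}\Box P_i$ must be covered by paths whose endpoints attach into the fibre of $c$.

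I would count as follows. Let $e(X)$ denote the number of cycle edges leaving a vertex set $X$. For $X=\{a\}\Box P_i$, the cycle restricted to $X$ consists of paths. An odd path in $P_i$ with excess colour forces its endpoints to lie in the majority colour. Summing over $a$, $b$ and $c$, I expect to show that $X=\{a,b,c\}\Box P_i$ has colour excess $3$ in a fixed colour. Hence the cycle must enter $X$ using at least $3$ more edges whose inside endpoint is the majority colour. These edges go to the two remaining neighbours of $c$ only.

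Now assemble $G$ by joining such gadgets along a path, or a tree, so that $G$ is balanced bipartite and has a $\{P_2,P_3\}$-factor. Balancing is done by alternating gadgets whose centre colours are opposite, and linking them by $P_2$-components. Choose the number of gadgets large relative to $n$, say $n$ or so, so that the argument works uniformly for every odd $i\le n$.

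The concluding step is a global count. Every gadget demands a surplus of boundary crossings in its majority colour. A balancing $P_2$ component, or a neighbouring gadget of opposite parity, can only absorb a bounded number of such crossings, since it has width $i$ and its own imbalance. Summing over a suitably chosen subtree gives a contradiction, which would show that $G\Box P_i$ has no Hamilton cycle.

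The main obstacle is making the local imbalance argument precise. One needs a lemma of the form: if $S\subseteq V(G)$ induces a subtree and $S\Box P_i$ has colour excess $k$ in $G\Box P_i$, then any Hamilton cycle uses at least $2k$ edges leaving $S\Box P_i$, all with their inside endpoint in the majority colour. The cut $\partial S$ in $G$ then consists of few edges, each contributing at most $i$ crossing edges, but with parity constraints along the $P_i$ direction.

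The real difficulty is that the raw excess ($\le$ a constant) is tiny compared with the $i$ available crossing edges per tree edge. So pure counting will fail, and I would instead exploit the degree-$2$ corner vertices $(x,1),(x,i)$ of the leaf fibres. These force crossing edges at the ends of the path, and I would propagate that rigidity inward along the $P_i$ direction. I would first try to prove that in each leaf fibre the cycle must be a single path snaking back, entering and leaving at specified rows. Then the centre fibre is forced into a pattern incompatible with the constraints from its other two neighbours.
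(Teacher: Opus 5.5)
Your proposal has a genuine gap, and it sits exactly where you conclude that ``pure counting will fail''. First, the local bookkeeping is off. The set $\{a,b,c\}\Box P_i$ is just a copy of $P_3\Box P_i$, whose colour excess is $1$, not $3$: the two leaf fibres give $+1$ each in one colour and the centre fibre gives $+1$ in the other. The cut lemma you state is also false as written. What is true, for any $S\subseteq V(G)$ with excess $\varepsilon_S$ of $S\Box P_i$, is an identity for the path cover $C\cap(S\Box P_i)$:
\[
2\varepsilon_S=\sum_{u\in\mathcal E_1}\chi(u)+2\sum_{u\in\mathcal E_0}\chi(u),
\]
where $\chi(u)=1$ or $-1$ according as $u$ has the majority or minority colour. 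So majority-colour endpoints outnumber minority-colour ones by $2\varepsilon_S$, but they need not all be of majority colour. This identity is exactly the content of Lemmas~\ref{lemasrednji} and~\ref{lemarob}. Moreover, $\varepsilon_S$ is simply the colour imbalance of $S$ in $G$. Hence the whole obstruction is a branch of the tree, cut off by a single edge, whose imbalance exceeds roughly $i/4$: the boundary vertex has only about $i/2$ rows of the required colour.

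That is the missing idea. By alternating gadgets with opposite centre colours, you make consecutive imbalances cancel, so every branch of your tree has bounded excess, and then indeed no counting contradiction exists. Your fallback (forcing leaf fibres to snake and propagating rigidity from the corner vertices) is only announced, and nothing indicates it yields a contradiction for that tree, so as it stands the proposal proves nothing.

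The paper instead places $k$ copies of $P_3$, all with their centres in the same colour class, on one side of a single spine edge. In $G_k$ the spine alternates between the end vertex $3$ of a pendant $P_3$ (odd $j$) and the centre $2$ of a $P_3$ with two pendant leaves (even $j$). Thus $P_1\cup\dots\cup P_i$ has imbalance $i$ for every $i\le k$, and balance is restored only by the mirror-image right half. The Claim in the proof of Theorem~\ref{main1} is precisely this accumulation along the spine. At $i=k$ it forces at least $2k$ cycle edges between the two spine fibres at even rows, of which there are only $(n-1)/2$. Hence $n\ge 4k+1$, and a single even $k$ with $4k>n$ handles every odd $i\le n$ at once. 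With your own gadget you could get the same effect by joining consecutive centres through a spine vertex that carries a pendant $P_2$, rather than through a $P_2$ in series, so that all centres receive the same colour.
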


%
%
%
%
%

In the rest of the introduction we give the notation and the terminology.
Let $P_n$ be the path on $n$ vertices. 
 The length of a path is the number of edges it contains. A path with one vertex is called a {\em trivial} path.  A set of pairwise disjoint paths $\mathcal P$ 
in a graph $G$ is called a {\em path cover} of $G$ if   $\bigcup_ {P\in \mathcal P} V(P)=V(G)$  (we allow that $\mathcal P$ contains trivial paths). 
Hence the union of paths in  $\mathcal P$  is a {\em path factor} of $G$ -- a spanning subgraph whose components are paths. 
We use the notation $\mathcal P$ to denote a path cover or a path factor, and it is clear from the context which of the two we are referring to. 

 If  $\mathcal P$ is a path cover (or a path factor) of a graph $G$, then we denote by  $\mathcal E_0(\mathcal P)$ the set of endvertices of trivial paths in  $\mathcal P$ and by 
 $\mathcal E_1(\mathcal P)$ the set of endvertices of nontrivial  paths in  $\mathcal P$.  
We define  $\mathcal E(\mathcal P)=\mathcal E_0(\mathcal P)\cup \mathcal E_1(\mathcal P)$. 

 If  $\mathcal P$ is a path cover of a graph $G$, and $R$ is a subgraph of $G$, then let 
 $\mathcal P_R=\{P\cap R\,|\,P\in \mathcal P\}$.  \\

Let $G=(V(G),E(G))$ and $H=(V(H),E(H))$ be graphs. The {\em Cartesian product} of graphs $G$ and $H$ is the graph, denoted as 
$G\Box H$, with vertex set $V(G\Box H)=V(G)\times V(H)$ where 
vertices $(g_1,h_1)$ and   $(g_2,h_2)$ are adjacent in $G\Box H$ if $g_1=g_2$ and $h_1h_2\in E(H)$ or 
$g_1g_2\in E(G)$ and $h_1=h_2$.  \\

Let $G$ be a graph and $H$ a subgraph of $G$. If $x$ and $y$ are vertices of $G$ that are adjacent in $H$ we say that $x$ is an $H$-neighbor of $y$. 

For any positive integer $n$ we define $[n]=\{1,\ldots,n\}$.  We assume (throughout the paper) that $V(P_n)=[n]$ and hence $V(P_3\Box P_n)=[3]\times [n]$.
Let $G=P_3\Box P_n$, and let $E$ and $O$ be subsets of $V(G)$ defined as follows  
$$E=\left\{(i,k)\,|\, i\in [3],  k\  \text{is even}\right\}~{\rm and }~ O=\left\{(i,k)\,|\, i\in [3],\ k\ \text{is odd} \right\}.$$ 

\label{prva}


\section{Proof of the main theorem}
\label{poglavje2}

In this section we prove Theorem \ref{main}.

\begin{figure}[h]
\begin{center}
\begin{tikzpicture} [scale=0.7]
\draw (-11,1)--(-8,1);   \path node at (-7,1) { $\ldots$ };
\draw (-6,1)--(2,1);  \path node at (3,1) { $\ldots$ };
\draw (4,1)--(7,1);	

\filldraw (-11,1) circle (2.5pt);\filldraw (-9,1) circle (2.5pt);\filldraw (-5,1) circle (2.5pt); ;\filldraw (-3,1) circle (2.5pt);
;\filldraw (-1,1) circle (2.5pt);
\filldraw (1,1) circle (2.5pt); \filldraw (5,1) circle (2.5pt);\filldraw (7,1) circle (2.5pt);

\draw  [line width=0.8mm] (-3,1)--(-3.5,2);\draw [line width=0.8mm] (-3,1)--(-2.5,2);	
\filldraw (-3.5,2) circle (2.5pt); \filldraw (-2.5,2) circle (2.5pt);
 \path node at (-3,0.5) { $2$ };	 \path node at (-3.5,2.5) { $1$ };	 \path node at (-2.5,2.5) { $3$ };

\draw [line width=0.8mm] (-1,1)--(-1.5,2); \draw  [line width=0.8mm](-1,1)--(-0.5,2);		
\filldraw (-1.5,2) circle (2.5pt); \filldraw (-0.5,2) circle (2.5pt);

\draw  [line width=0.8mm](1,1)--(1,3); 		
	\filldraw (1,2) circle (2.5pt); \filldraw (1,3) circle (2.5pt);	

\draw  [line width=0.8mm] (-5,1)--(-5,3);	
\filldraw (-5,1) circle (2.5pt); \filldraw (-5,2) circle (2.5pt);\filldraw (-5,3) circle (2.5pt);
 \path node at (-5,0.5) { $3$ };	 \path node at (-5.5,2) { $2$ };	 \path node at (-5.5,3) { $1$ };	

\draw [line width=0.8mm] (5,1)--(4.5,2);	\draw  [line width=0.8mm](5,1)--(5.5,2);	
\filldraw (4.5,2) circle (2.5pt); \filldraw (5.5,2) circle (2.5pt);	

\draw  [line width=0.8mm] (7,1)--(7,3);	
\filldraw (7,2) circle (2.5pt); \filldraw (7,3) circle (2.5pt);

\draw [line width=0.8mm] (-9,1)--(-9.5,2); \draw  [line width=0.8mm](-9,1)--(-8.5,2);		
\filldraw (-9.5,2) circle (2.5pt); \filldraw (-8.5,2) circle (2.5pt);
 \path node at (-9,0.5) { $2$ };	 \path node at (-9.5,2.5) { $1$ };	 \path node at (-8.5,2.5) { $3$ };	

\draw  [line width=0.8mm] (-11,1)--(-11,3);	
\filldraw (-11,2) circle (2.5pt); \filldraw (-11,3) circle (2.5pt);
 \path node at (-11.5,1) { $3$ };	 \path node at (-11.5,2) { $2$ };	 \path node at (-11.5,3) { $1$ };	

 \path node at (-11,4) { $P_1$ };  \path node at (-9,3.5) { $P_2$ };\path node at (-5,4) { $P_{k-1}$ };
 \path node at (-3,3.5) { $P_{k}$ }; \path node at (-1,3.5) { $P_{k+1}$ };\path node at (1,4) { $P_{k+2}$ };\path node at (5,3.5) { $P_{2k-1}$ };\path node at (7,4) { $P_{2k}$ }; 
\end{tikzpicture}
\caption{Graph $G_k$}
\label{ggraf}
\end{center}
\end{figure}

Let $G_k$ be the graph shown in Fig.~\ref{ggraf}. Note that $k$ is even and that $G_k$ is a balanced tree. Moreover, observe that $\mathcal P=\{P_i\,|\,i\in [2k]\}$
is a  $\{P_2,P_3\}$-factor of $G_k$. 
In our proof we will rely only on the left half of the graph $G_k$, i.e. the part covered by paths $P_i$ where $i\in [k]$ (the right part serves only to make the graph balanced --- in fact we could have  replaced the right part with any suitable tree that   makes the graph balanced). 
By a misuse of the notation we assume that $V(P_i)=[3]$ for $i\in [k]$. We also assume that the central vertex of each $P_i$ is denoted by 2   
and  that for every odd $i\leq k$,  the vertex of $P_i$ that is   a leaf of $G_k$ is denoted by 1.

Let $k$ be  a given positive even integer. We will prove that for every odd $n< 4k$ the product $G_k\Box P_n$ is not hamiltonian (see Theorem \ref{main1}). 
For a path cover   $\mathcal P$ of $P_3\Box P_n$ we define  
$$\mathcal V_e= E\cap\mathcal E_1(\mathcal P), \overline{\mathcal V}_e=E\cap\mathcal E_0(\mathcal P), \mathcal V_o=O\cap\mathcal E_1(\mathcal P)~ {\rm and}~
\overline{\mathcal V}_o=O\cap\mathcal E_0(\mathcal P).$$
The proof of Theorem \ref{main1} heavily relies on the following two lemmas.

\begin{lemma}
\label{lemasrednji}
Let $n$ be an odd integer and $\mathcal P$ a path cover of $P_3\Box P_n$ such that $\mathcal {E(P)}\subseteq \{2\}\times [n]$. 
  Then we have
  $|\mathcal V_e|+2|\overline{\mathcal V}_e|=|\mathcal V_o|+2|\overline{\mathcal V}_o|+2$.
 
\end{lemma}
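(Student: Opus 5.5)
The plan is a parity (bipartite coloring) count. The graph $P_3\Box P_n$ is bipartite with color classes
$$A=\{(i,k)\,|\, i+k \text{ even}\}\quad\text{and}\quad B=\{(i,k)\,|\, i+k \text{ odd}\}.$$
Every endvertex of a path in $\mathcal P$ lies in the middle row $\{2\}\times[n]$. A vertex $(2,k)$ lies in $A$ exactly when $k$ is even, i.e. when it lies in $E$, and in $B$ exactly when it lies in $O$. So $\mathcal V_e\cup\overline{\mathcal V}_e\subseteq A$ and $\mathcal V_o\cup\overline{\mathcal V}_o\subseteq B$. The identity should then follow from comparing $|A|-|B|$ computed in two ways.

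\emph{First computation, path by path.} A path alternates colors. For a nontrivial path $P$ with endvertices $x,y$, we have $|V(P)\cap A|-|V(P)\cap B|=\tfrac12\big(a(P)-b(P)\big)$, where $a(P)$ and $b(P)$ count the endvertices of $P$ in $A$ and in $B$. This gives $1$ if both ends are in $A$, $-1$ if both are in $B$, and $0$ otherwise. For a trivial path $\{x\}$ the difference is $\pm1$. We can write this as $\tfrac12(2[x\in A]-2[x\in B])$, which is why trivial paths are counted with weight $2$. Summing over all paths of $\mathcal P$, and using that the paths partition $V(P_3\Box P_n)$, gives
$$|A|-|B|=\tfrac12\Big(|\mathcal V_e|+2|\overline{\mathcal V}_e|-|\mathcal V_o|-2|\overline{\mathcal V}_o|\Big).$$

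\emph{Second computation, directly.} With $n$ odd, rows $1$ and $3$ each contain $(n+1)/2$ vertices of $A$ (those with $k$ odd). Row $2$ contains $(n-1)/2$ vertices of $A$ (those with $k$ even). Hence $|A|=(3n+1)/2$, $|B|=(3n-1)/2$, and $|A|-|B|=1$. Substituting into the displayed equation and rearranging yields $|\mathcal V_e|+2|\overline{\mathcal V}_e|=|\mathcal V_o|+2|\overline{\mathcal V}_o|+2$.

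The only delicate point is the bookkeeping: one must check that trivial paths contribute with weight $2$ and that each endvertex is counted exactly once in the appropriate set. Once the endvertices are confined to the middle row, $E$ and $O$ coincide there with the two color classes, and the rest is routine.
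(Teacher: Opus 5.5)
Your proof is correct, and it takes a genuinely different and much shorter route than the paper's.

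\emph{The paper's route.} It works locally. It classifies the configuration of $\mathcal F$ around each endvertex $(2,k)$ into the seven cases of Lemma~\ref{moznisredina}. It then determines which configurations may follow one another (Lemma~\ref{claim2}, via a parity count of endvertices in the strip between consecutive endvertices and the propagation Lemma~\ref{lemaliharazdalja}). It pins down the first and last endvertex (Lemma~\ref{claim4}, which relies on an external result). All of this is encoded in the digraph $T$, and the contributions are summed block by block along the resulting walk in $T$.

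\emph{Your route.} You replace all of this with one global double count of $|A|-|B|$ in the bipartition of $P_3\Box P_n$. The hypothesis $\mathcal E(\mathcal P)\subseteq\{2\}\times[n]$ is used only to identify $E$ and $O$ with the two colour classes on the set of endvertices. The oddness of $n$ is used only to get $|A|-|B|=1$. Your bookkeeping is right: a path whose ends lie in one class has a surplus of exactly one vertex of that class, a path with ends in different classes is balanced, and an isolated vertex contributes $\pm 1$. That is exactly why trivial paths carry weight $2$.

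\emph{What each buys.} Your argument is short and self-contained. The same computation also proves Lemma~\ref{lemarob} immediately: on row $3$ the class $A$ coincides with $O$ rather than $E$, which flips the sign and gives $|\mathcal V_e|+2|\overline{\mathcal V}_e|+2=|\mathcal V_o|+2|\overline{\mathcal V}_o|$. The paper's case analysis yields extra structural information, such as which local patterns can be consecutive and the parity of the positions of the first and last endvertex. None of that is needed for the identity, or for Theorem~\ref{main1}, which uses only the two numerical equalities.
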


\begin{lemma}
\label{lemarob}

Let $n$ be an odd integer and $\mathcal P$ a path cover of $P_3\Box P_n$ such that  $\mathcal {E(P)}\subseteq \{3\}\times [n]$. 
 Then we have   $|\mathcal V_e|+2|\overline{\mathcal V}_e|+2=|\mathcal V_o|+2|\overline{\mathcal V}_o|$.

\end{lemma}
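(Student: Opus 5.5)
The plan is a double-counting (handshake) argument. The natural bipartition is not the column parity $E/O$ but the chessboard colouring of the grid $P_3\Box P_n$. For a vertex $v$ let $d(v)$ be its degree in the path factor $\mathcal P$. Then $2-d(v)$ equals $2$ if $v\in\mathcal E_0(\mathcal P)$, equals $1$ if $v\in\mathcal E_1(\mathcal P)$, and equals $0$ otherwise (internal vertices of paths). Hence for any set $S$ of vertices,
$$\sum_{v\in S}(2-d(v))=|S\cap\mathcal E_1(\mathcal P)|+2|S\cap\mathcal E_0(\mathcal P)|.$$
So the quantity $|\mathcal V_e|+2|\overline{\mathcal V}_e|$ is $\sum_{v\in E}(2-d(v))$, and similarly for $O$.

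First, colour $(i,k)$ by the parity of $i+k$. Let $A=\{(i,k)\,|\,i+k\ \text{even}\}$ and let $B$ be its complement. This is a proper $2$-colouring of $P_3\Box P_n$, so every edge of $\mathcal P$ has exactly one end in $A$. Therefore $\sum_{A}d(v)=\sum_{B}d(v)=|E(\mathcal P)|$, and
$$\sum_{v\in A}(2-d(v))-\sum_{v\in B}(2-d(v))=2(|A|-|B|).$$
Since $n$ is odd, rows $1$ and $3$ each contain $(n+1)/2$ vertices of $A$ and $(n-1)/2$ of $B$. Row $2$ contains $(n-1)/2$ vertices of $A$ and $(n+1)/2$ of $B$. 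Hence $|A|-|B|=1$, and the right-hand side equals $2$.

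Second, use the hypothesis $\mathcal E(\mathcal P)\subseteq\{3\}\times[n]$. Every vertex in rows $1$ and $2$ is an internal vertex of a path, so it contributes $0$ to both sums. Only row $3$ contributes. In row $3$ the vertex $(3,k)$ lies in $A$ exactly when $k$ is odd, that is, $A\cap(\{3\}\times[n])=O\cap(\{3\}\times[n])$ and $B\cap(\{3\}\times[n])=E\cap(\{3\}\times[n])$. Since all endvertices lie in row $3$, the two sums become $|\mathcal V_o|+2|\overline{\mathcal V}_o|$ and $|\mathcal V_e|+2|\overline{\mathcal V}_e|$. Substituting gives $|\mathcal V_o|+2|\overline{\mathcal V}_o|-(|\mathcal V_e|+2|\overline{\mathcal V}_e|)=2$, which is the claimed identity.

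I expect no real obstacle. The only subtle point is recognising that one should not work with the column classes $E,O$ directly, since horizontal edges inside a column join two vertices of the same class. Passing to the proper chessboard colouring removes this problem. The colouring agrees with $E/O$ on row $3$, and rows $1,2$ contribute nothing by the hypothesis. The same computation with row $2$ (where the colouring is flipped relative to $E/O$) also gives Lemma~\ref{lemasrednji}, with the sign of the $2$ reversed. This serves as a consistency check.
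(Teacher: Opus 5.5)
Your proof is correct, and it takes a genuinely different and much shorter route than the paper's.

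The paper proves the identity by local structural analysis:
\begin{itemize}
\item it classifies the nineteen possible configurations of $\mathcal F$ around an endvertex $(3,k)$ (Lemma~\ref{moznih19});
\item it derives which configurations may follow one another, and with what parity of the gap (Lemma~\ref{paj}, using the propagation Lemma~\ref{lemaliharazdalja});
\item it restricts the first and last endvertex (Lemma~\ref{zacetnizadnji}, with the parity part deferred to Lemma 2.1 of \cite{spa});
\item it encodes all of this in the digraph $T'$ and tracks how each segment of the walk $x_1,\dots,x_r$ contributes to $a_i+2c_i$ and $b_i+2d_i$.
\end{itemize}

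You replace all of this with one global count. On the proper chessboard colouring, $\sum_A d=\sum_B d$, so
$$\sum_A(2-d)-\sum_B(2-d)=2(|A|-|B|)=2.$$
The hypothesis $\mathcal E(\mathcal P)\subseteq\{3\}\times[n]$ is then used only to identify $A$ with $O$ and $B$ with $E$ on the one row that contributes.

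Your route has several advantages:
\begin{itemize}
\item It is self-contained: there is no appeal to \cite{spa} and no proofs left as ``similar.''
\item It treats trivial and nontrivial paths uniformly through the weight $2-d(v)$.
\item It yields Lemma~\ref{lemasrednji} at once with the sign reversed, since the colouring is flipped relative to $E/O$ on row $2$.
\item It generalizes to any bipartite graph, for example $H\Box P_n$ for other trees $H$, with constant $2(|A|-|B|)$.
\end{itemize}

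The paper's case analysis does give finer information about which endvertex configurations can follow each other. None of that is needed for the stated identity or for its use in Theorem~\ref{main1}.
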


\begin{figure}[htb]\label{z}
\begin{center}
\begin{tikzpicture} [scale=0.6]

\path node at (2,-1) {$(a)$};
\path node at (8,-1) {$(b)$};
\path node at (1,0.2) {$1$};
\path node at (2,0.2) {$2$};
\path node at (3,0.2) {$3$};
\path node at (7,0.2) {$1$};
\path node at (8,0.2) {$2$};
\path node at (9,0.2) {$3$};

\path node at (0.2,1) {$9$};
\path node at (0.2,2) {$8$};
\path node at (0.2,3) {$7$};
\path node at (0.2,4) {$6$};
\path node at (0.2,5) {$5$};
\path node at (0.2,6) {$4$};
\path node at (0.2,7) {$3$};
\path node at (0.2,8) {$2$};
\path node at (0.2,9) {$1$};

\path node at (6.2,1) {$9$};
\path node at (6.2,2) {$8$};
\path node at (6.2,3) {$7$};
\path node at (6.2,4) {$6$};
\path node at (6.2,5) {$5$};
\path node at (6.2,6) {$4$};
\path node at (6.2,7) {$3$};
\path node at (6.2,8) {$2$};
\path node at (6.2,9) {$1$};

\foreach \x in {1,2,3}
\draw  (\x,1)--(\x,9);
\foreach \y in {1,2,3,4,5,6,7,8,9}
\draw (1,\y)--(3,\y);

\foreach \x in {7,8,9}
\draw  (\x,1)--(\x,9);
\foreach \y in {1,2,3,4,5,6,7,8,9}
\draw (7,\y)--(9,\y);

\draw [line width=0.8mm] (1,1)--(1,4);
\draw [line width=0.8mm] (1,5)--(1,9);
\draw [line width=0.8mm] (2,4)--(2,5);
\draw [line width=0.8mm] (3,1)--(3,2);
\draw [line width=0.8mm] (3,3)--(3,7);
\draw [line width=0.8mm] (3,8)--(3,9);
\draw [line width=0.8mm] (1,1)--(3,1);
\draw [line width=0.8mm] (2,2)--(3,2);
\draw [line width=0.8mm] (2,3)--(3,3);
\draw [line width=0.8mm] (1,4)--(2,4);
\draw [line width=0.8mm] (1,5)--(2,5);
\draw [line width=0.8mm] (2,7)--(3,7);
\draw [line width=0.8mm] (2,8)--(3,8);
\draw [line width=0.8mm] (1,9)--(3,9);
\filldraw[fill] (2,2) circle (3pt);
\filldraw[fill] (2,3) circle (3pt);
\filldraw[fill] (2,6) circle (3pt);
\filldraw[fill] (2,7) circle (3pt);
\filldraw[fill] (2,8) circle (3pt);

\draw [line width=0.8mm] (7,1)--(7,2);
\draw [line width=0.8mm] (7,3)--(7,5);
\draw [line width=0.8mm] (7,6)--(7,7);
\draw [line width=0.8mm] (7,8)--(7,9);
\draw [line width=0.8mm] (8,2)--(8,3);
\draw [line width=0.8mm] (8,4)--(8,5);
\draw [line width=0.8mm] (8,7)--(8,8);
\draw [line width=0.8mm] (9,1)--(9,2);
\draw [line width=0.8mm] (9,3)--(9,4);
\draw [line width=0.8mm] (9,8)--(9,9);
\draw [line width=0.8mm] (7,1)--(9,1);
\draw [line width=0.8mm] (7,2)--(8,2);
\draw [line width=0.8mm] (7,3)--(8,3);
\draw [line width=0.8mm] (8,4)--(9,4);
\draw [line width=0.8mm] (7,5)--(8,5);
\draw [line width=0.8mm] (7,6)--(9,6);
\draw [line width=0.8mm] (7,7)--(8,7);
\draw [line width=0.8mm] (7,8)--(8,8);
\draw [line width=0.8mm] (7,9)--(9,9);

\filldraw[fill] (9,2) circle (3pt);
\filldraw[fill] (9,3) circle (3pt);
\filldraw[fill] (9,5) circle (3pt);
\filldraw[fill] (9,6) circle (3pt);
\filldraw[fill] (9,7) circle (3pt);
\filldraw[fill] (9,8) circle (3pt);

\end{tikzpicture}
\caption{A path cover of  $P_3\Box P_9$ where (a) $\mathcal {E(P)}\subseteq \{2\}\times [9]$ and (b) $\mathcal {E(P)}\subseteq \{3\}\times [9]$.}
\label{xxx}
\end{center}
\end{figure}

The proof of the above lemmas is postponed to the following sections. Here we exhibit two examples, shown in Fig.~\ref{z}, of path covers of $P_3\Box P_9$ where in example 
 (a) $\mathcal {E(P)}\subseteq \{2\}\times [9]$ and in (b) $\mathcal {E(P)}\subseteq \{3\}\times [9]$. So for (a) Lemma \ref{lemasrednji} applies and 
for (b) Lemma \ref{lemarob} applies. In case (a) we have  $|\mathcal V_e|=2, |\overline{\mathcal V}_e|=1, |\mathcal V_o|=2$ and $|\overline{\mathcal V}_o|=0$. 
 In case (b) we have  $|\mathcal V_e|=3, |\overline{\mathcal V}_e|=0, |\mathcal V_o|=1$ and $|\overline{\mathcal V}_o|=2$.

The following theorem is a concretization of Theorem \ref{main}. 

\begin{theorem}\label{main1}
For every positive even integer $k$    the graph $G_{k}\Box P_n$ is not hamiltonian   for every odd $n< 4k$. 
\end{theorem}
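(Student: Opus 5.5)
The plan is to take a hypothetical Hamilton cycle $C$ of $G_k\Box P_n$ and restrict it to the fibres $P_i\Box P_n$ for $i\in[k]$. Lemmas \ref{lemasrednji} and \ref{lemarob} then give a parity imbalance in how $C$ crosses between consecutive fibres. This imbalance grows linearly in $i$, while the number of available crossing edges is bounded by $n$.

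\textbf{Setup.} Suppose $n<4k$ is odd and $C$ is a Hamilton cycle of $G_k\Box P_n$. The case $n=1$ is trivial, since $G_k$ is a tree. For each $i\in[k]$ let $\mathcal P^i=C_{P_i\Box P_n}$. Since $C$ is not contained in a single fibre, $\mathcal P^i$ is a path cover of $P_i\Box P_n\cong P_3\Box P_n$.

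From Fig.~\ref{ggraf} and the labelling convention we read off the following.
\begin{itemize}
\item For even $i$, the vertices $1,3$ of $P_i$ are leaves of $G_k$, and only the centre $2$ lies on the spine. So every edge of $C$ leaving $P_i\Box P_n$ starts in $\{2\}\times[n]$, hence $\mathcal E(\mathcal P^i)\subseteq\{2\}\times[n]$.
\item For odd $i$, the vertex $1$ is a leaf and $2$ has no neighbours outside $P_i$. So only vertex $3$ is on the spine, and $\mathcal E(\mathcal P^i)\subseteq\{3\}\times[n]$.
\end{itemize}

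\textbf{Counting crossing edges.} An endvertex of a nontrivial path of $\mathcal P^i$ is incident with exactly one edge of $C$ leaving the fibre, and a trivial path with exactly two. Such edges join $(x,j)$ to $(y,j)$ with the same layer $j$, so they preserve the parity of $j$.

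Let $a_i$ (resp.\ $b_i$) be the number of edges of $C$ between $P_i\Box P_n$ and $P_{i+1}\Box P_n$ in even (resp.\ odd) layers, and set $a_0=b_0=0$. Then $|\mathcal V_e|+2|\overline{\mathcal V}_e|=a_{i-1}+a_i$ and $|\mathcal V_o|+2|\overline{\mathcal V}_o|=b_{i-1}+b_i$ for $\mathcal P^i$. The lemmas therefore give:
\begin{itemize}
\item for even $i$ (Lemma \ref{lemasrednji}): $a_{i-1}+a_i=b_{i-1}+b_i+2$;
\item for odd $i$ (Lemma \ref{lemarob}): $b_{i-1}+b_i=a_{i-1}+a_i+2$.
\end{itemize}

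\textbf{Induction on $i$.} The claim is that $b_i-a_i=2i$ for odd $i$ and $a_i-b_i=2i$ for even $i$.
\begin{itemize}
\item Base case: $b_1=a_1+2$.
\item Odd to even: if $b_{i-1}-a_{i-1}=2(i-1)$ and $i$ is even, then $a_i-b_i=2+b_{i-1}-a_{i-1}=2i$.
\item Even to odd: symmetrically, $b_i-a_i=2+a_{i-1}-b_{i-1}=2i$.
\end{itemize}

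\textbf{Contradiction.} Between the fibres of two adjacent spine vertices there is exactly one edge per layer. There are $(n-1)/2$ even layers, so $a_k\le (n-1)/2$. Since $k$ is even, $a_k=b_k+2k\ge 2k$, which gives $n\ge 4k+1$. This contradicts $n<4k$. The right half of $G_k$ plays no role beyond providing the edges between $P_k$ and $P_{k+1}$ and making the graph balanced.

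\textbf{Main obstacle.} The real work lies in Lemmas \ref{lemasrednji} and \ref{lemarob}, which are assumed here. In this proof, the point needing care is checking that the endpoint hypotheses of those lemmas hold for each fibre. This depends on exactly which vertex of each $P_i$ attaches to the spine, and on no other vertex of $P_i$ having outside neighbours. It must also be checked that the weighted counts $|\mathcal V_e|+2|\overline{\mathcal V}_e|$ exactly count the crossing edges in even layers.
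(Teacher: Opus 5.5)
Your proof is correct and is essentially the paper's argument. Your edge counts $a_i$ and $b_i$ are exactly the paper's quantities $|\overrightarrow{\mathcal V_e^i}|+|\overline{\mathcal V_e^i}|$ and $|\overrightarrow{\mathcal V_o^i}|+|\overline{\mathcal V_o^i}|$, and your induction ($b_i-a_i=2i$ for odd $i$, $a_i-b_i=2i$ for even $i$) together with the final count of even layers between the fibres of $P_k$ and $P_{k+1}$ coincides with the paper's Claim and conclusion.
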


\proof
Assume the contrary and 
let $n< 4k$ be a given odd integer such that  $G_k\Box P_n$ is hamiltonian.   
Let $C$ be a hamiltonian cycle in $G_k\Box P_n$. 

Define  $\mathcal P_i=C\cap(P_i\Box P_n)$,  
and note that $\mathcal P_i$ is a path factor of $P_i\Box P_n $  for $i\in [k]$. 
Observe that  $\mathcal {E(P}_i)\subseteq \{2\}\times [n]$ for every even $i\in [k]$ and   $\mathcal {E(P}_i)\subseteq \{3\}\times [n]$ for every odd $i\in [k]$. 
Define  $\mathcal V_e^i= E\cap\mathcal E_1(\mathcal P_i)$, $\overline{\mathcal V_e^i}=E\cap\mathcal E_0(\mathcal P_i)$, $\mathcal V_o^i=O\cap\mathcal E_1(\mathcal P_i)$ and
$\overline{\mathcal V_o^i}=O\cap\mathcal E_0(\mathcal P_i)$.

By Lemma \ref{lemasrednji} we have 
\begin{equation}
\label{ena}
|\mathcal V_e^i|+2|\overline{\mathcal V_e^i}|=|\mathcal V_o^i|+2|\overline{\mathcal V_o^i}|+2
\end{equation}  for   even $i\in [k]$, 
and by Lemma \ref{lemarob} we have 
\begin{equation}
\label{dva}
|\mathcal V_e^i|+2|\overline{\mathcal V_e^i}|+2=|\mathcal V_o^i|+2|\overline{\mathcal V_o^i}|
\end{equation} 
for odd $i\in [k]$.

Let $\overrightarrow{\mathcal V_e^i}$ be the set of vertices in $\mathcal V_e^i$ that have a $C$-neighbour in 
$P_{i+1}\Box P_n$, and   let $\overleftarrow{\mathcal V_e^i}$ be the set of vertices in $\mathcal V_e^i$ that have a $C$-neighbour in 
$P_{i-1}\Box P_n$ (analogously we define  $\overrightarrow{\mathcal V_o^i}$ and $\overleftarrow{\mathcal V_o^i}$). 
Note that all vertices in $\overline{\mathcal V_e^i}\cup \overline{\mathcal V_o^i}$ have a $C$-neighbour in $P_{i-1}\Box P_n$ and in $P_{i+1}\Box P_n$. 
Then we have

\begin{equation} \label{ve}
|\mathcal V_e^i|=|\overrightarrow{\mathcal V_e^i}|+|\overleftarrow{\mathcal V_e^i}| 
\ \ \text{and}\ \
|\mathcal V_o^i|=|\overrightarrow{\mathcal V_o^i}|+|\overleftarrow{\mathcal V_o^i}|
\end{equation}  and 
\begin{equation}\label{prehod}
|\overrightarrow{\mathcal V_e^i}|+|\overline{\mathcal V_e^{i}}|=|\overleftarrow{\mathcal V_e^{i+1}}|+|\overline{\mathcal V_e^{i+1}}|
 \ \ \text{and}\ \
|\overrightarrow{\mathcal V_o^i}|+|\overline{\mathcal V_o^{i}}|=|\overleftarrow{\mathcal V_o^{i+1}}|+|\overline{\mathcal V_o^{i+1}}|.
\end{equation}
 
\medskip

{\em Claim:} For every odd $i\in [k]$ we have
 \begin{equation}\label{claimo}
|\overrightarrow{\mathcal V_e^i}|+|\overline{\mathcal V_e^i}|+2i= |\overrightarrow{\mathcal V_o^i}|+|\overline{\mathcal V_o^i}|,\end{equation}
and for every even $i\in [k]$  we have
 \begin{equation}\label{claime}
|\overrightarrow{\mathcal V_e^i}|+|\overline{\mathcal V_e^i}|= |\overrightarrow{\mathcal V_o^i}|+|\overline{\mathcal V_o^i}|+2i.\end{equation}

{\em Proof:} We will prove \eqref{claimo} and \eqref{claime} by induction. 
 By the definition   $|\overleftarrow{\mathcal V_e^1}|=0$, $|\overleftarrow{\mathcal V_o^1}|=0$, 
 $|\overline{\mathcal V_e^1}|=0$ and $|\overline{\mathcal V_o^1}|=0$ and therefore, by \eqref{dva} and \eqref{ve} we have 
$|\overrightarrow{\mathcal V_e^1}|+2=|\overrightarrow{\mathcal V_o^1}|.$
 This  proves  \eqref{claimo} for $i=1$.


We will distinguish two cases: first, $i$ is even, and second,  $i$ is odd.
Assume that claim  
 (\ref{claime}) holds for some even $i<k$. We shall prove that  (\ref{claimo}) holds for $i+1$.  
By   \eqref{dva} we have
$$|\mathcal V_e^{i+1}|+2|\overline{\mathcal V_e^{i+1}}|+2=|\mathcal V_o^{i+1}|+2|\overline{\mathcal V_o^{i+1}}|$$
and therefore, by \eqref{ve},  
$$|\overrightarrow{\mathcal V_e^{i+1}}|+|\overleftarrow{\mathcal V_e^{i+1}}|+2|\overline{\mathcal V_e^{i+1}}|+2=
|\overrightarrow{\mathcal V_o^{i+1}}|+|\overleftarrow{\mathcal V_o^{i+1}}|+2|\overline{\mathcal V_o^{i+1}}|.$$
By (\ref{prehod}) we have 
 $$|\overrightarrow{\mathcal V_e^{i+1}}|+ |\overrightarrow{\mathcal V_e^{i}}|+|\overline{\mathcal V_e^{i}}|
+|\overline{\mathcal V_e^{i+1}}|+2=
|\overrightarrow{\mathcal V_o^{i+1}}|+|\overrightarrow{\mathcal V_o^{i}}|+|\overline{\mathcal V_o^{i}}|+|\overline{\mathcal V_o^{i+1}}|.$$
By  inductive assumption (see (\ref{claime})) we have
$|\overrightarrow{\mathcal V_e^{i}}|+|\overline{\mathcal V_e^{i}}|= |\overrightarrow{\mathcal V_o^{i}}|+|\overline{\mathcal V_o^{i}}|+2i$, so 
$$|\overrightarrow{\mathcal V_e^{i+1}}|+|\overline{\mathcal V_e^{i+1}}|+2i+2=
|\overrightarrow{\mathcal V_o^{i+1}}|+|\overline{\mathcal V_o^{i+1}}|,$$which proves  (\ref{claimo}) for $i+1$. 
Similarly, we prove that if     (\ref{claimo}) holds for an odd $i<k$, then (\ref{claime}) holds for  $i+1$. 
This completes the proof of the claim. 

Now we return to the proof of the theorem. By \eqref{claime} we have (for $i=k$)
$$|\overrightarrow{\mathcal V_e^k}|+|\overline{\mathcal V_e^k}|= |\overrightarrow{\mathcal V_o^k}|+|\overline{\mathcal V_o^k}|+2k.$$
Since $ |\overrightarrow{\mathcal V_o^k}|$ and $|\overline{\mathcal V_o^k}|$ are nonegative, the left side of the above equation is at least $2k$.
Note that $\overrightarrow{\mathcal V_e^k}\cup \overline{\mathcal V_e^k}\subseteq E$ and therefore the number of even $i\leq n$ is at least $2k$. 
It follows that $n\geq 4k$, contradicting the  initial assumption that $n<4k$. \qed 


\section {Proof of Lemma \ref{lemasrednji}}

The aim of this section is to prove Lemma \ref{lemasrednji}.

Throughout this section we   assume that $n$ is  an odd integer, $G=P_3\Box P_n$ and $\mathcal P$ is a path cover of $G$ such that $\mathcal {E(P)}\subseteq \{2\}\times [n]$. 
We will prove that 
  $$|\mathcal V_e|+2|\overline{\mathcal V}_e|=|\mathcal V_o|+2|\overline{\mathcal V}_o|+2.$$ 
We start with  several   preparatory results. For a path cover  $\mathcal P$ of $G$ let $\mathcal F= \bigcup_ {P\in \mathcal P} E(P)$.
In Fig.~\ref{slika1} we show  several possibilities that can occur when $(2,k)\in\mathcal E(\mathcal P)$ for 
some $k\in[n]$.  In each drawing bold lines represent edges that are surely in $\mathcal F$, and 
edges that are denoted by thin lines may  or may not be  in $\mathcal F$.  
Note that the  assumption   $\mathcal {E(P)}\subseteq \{2\}\times [n]$  implies that all vertices of $\{1,3\}\times [n]$ are of degree 2 in  $\mathcal P$. 


\begin{lemma}
\label{moznisredina}
 If $(2,k)\in\mathcal E(\mathcal P)$ then one of the seven cases shown in Fig.~\ref{slika1} occurs.
\end{lemma}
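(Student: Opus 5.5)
We do not see the figure, so the plan is to carry out the local case analysis around $(2,k)$ that the seven pictures presumably record. The only input is the standing observation that every vertex of $\{1,3\}\times[n]$ has degree exactly $2$ in $\mathcal F$, while $(2,k)$ has degree $0$ or $1$ in $\mathcal F$. We also use that $\mathcal F$ contains no cycle.

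\textbf{Step 1: the column through $(2,k)$.} Consider $(1,k)$, whose neighbours in $G$ are $(1,k-1)$, $(1,k+1)$ (when they exist) and $(2,k)$. Since $(1,k)$ has degree $2$ in $\mathcal F$:
\begin{itemize}
\item if $1<k<n$, then either both vertical edges at $(1,k)$ lie in $\mathcal F$, or exactly one of them does together with $(1,k)(2,k)$;
\item if $k\in\{1,n\}$, then $(1,k)$ has only two neighbours, so both edges are forced and, in particular, $(1,k)(2,k)\in\mathcal F$.
\end{itemize}
The same holds symmetrically for $(3,k)$. Since $(2,k)$ has degree at most $1$ in $\mathcal F$, at most one of $(1,k)(2,k)$ and $(3,k)(2,k)$ lies in $\mathcal F$. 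Hence $k=1$ and $k=n$ are impossible: at $k=1$ both $(1,1)(2,1)$ and $(3,1)(2,1)$ would be forced, giving $(2,1)$ degree $2$. So we may assume $1<k<n$.

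\textbf{Step 2: split by the edge at $(2,k)$.}
\begin{itemize}
\item If $(2,k)$ is a trivial path, then both $(1,k)$ and $(3,k)$ use both vertical edges. This is one picture.
\item If $(2,k)$ is an endpoint of a nontrivial path, its unique $\mathcal F$-edge goes to one of $(1,k)$, $(3,k)$, $(2,k-1)$, $(2,k+1)$. By the symmetries $1\leftrightarrow 3$ and $k\mapsto n+1-k$ these reduce to two types: a horizontal edge, say to $(1,k)$, or a vertical edge, say to $(2,k-1)$.
\end{itemize}

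\textbf{Step 3: refine each type by the forced neighbourhoods.}
\begin{itemize}
\item \emph{Horizontal edge to $(1,k)$.} Then $(3,k)$ uses both vertical edges. The vertex $(1,k)$ takes exactly one of $(1,k\pm1)$, which gives two subcases; these may be mirror images and could be identified.
\item \emph{Vertical edge to $(2,k-1)$.} Then $(1,k)$ and $(3,k)$ both use both vertical edges. The neighbour $(2,k-1)$ now cannot use the edge to $(1,k-1)$ together with the edge to $(3,k-1)$ along with the edge down to $(2,k)$, since its degree is at most $2$. Moreover, $(1,k-1)$ already has its vertical edge to $(1,k)$, so it needs exactly one more edge. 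Enumerating which horizontal edges at row $k-1$ are present gives the remaining pictures.
\end{itemize}
In the vertical case, acyclicity rules out configurations such as the 4-cycle on $(1,k-1),(1,k),(2,k),(2,k-1)$. This is only relevant when $(2,k-1)$ is adjacent to $(1,k-1)$ while the other edges are also present.

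\textbf{Main obstacle.} The hard part is matching the resulting finite enumeration exactly to the seven drawings. That requires deciding how far from row $k$ the forced structure is drawn, and which symmetric variants the figure counts separately. I would first list all configurations on rows $k-1,k,k+1$ satisfying the degree constraints and acyclicity, then merge or split them to match the figure. Since only finitely many edges are involved, this is routine once the boundary cases from Step 1 are excluded.
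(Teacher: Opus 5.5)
Your proposal is correct and is essentially the paper's own argument. The paper also splits on whether $(2,k)$ is isolated or which neighbour carries its unique $\mathcal F$-edge, and uses that $(1,k)$ and $(3,k)$ have degree $2$ in $\mathcal F$ to force their vertical edges. This gives exactly the seven pictures: $A$ (isolated); $B,E$ and $C,F$ (edge to $(3,k)$, resp.\ $(1,k)$, which then continues up or down); and $D,G$ (edge to $(2,k+1)$, resp.\ $(2,k-1)$). All symmetric variants are drawn separately and nothing beyond these forced edges is recorded, so your further enumeration of row $k-1$ in the vertical case and your acyclicity remark are unnecessary. That $4$-cycle cannot arise anyway, since it would need $(1,k)(2,k)\in\mathcal F$. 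Your Step 1 exclusion of $k\in\{1,n\}$ is a point the paper's proof passes over silently.
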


\proof Suppose that $(2,k)\in \mathcal E_0(\mathcal P)$; equivalently $(2,k)$ is an isolated vertex in $\mathcal P$. Since  $\mathcal {E(P)}\subseteq \{2\}\times [n]$, $(1,k)$ and $(3,k)$ are vertices of degree 2  in $\mathcal P$, 
and therefore case $A$ occurs. Now suppose $(2,k)\in \mathcal E_1(\mathcal P)$. If $(2,k)-(3,k)$ is an edge of $\mathcal F$, then since 
$(1,k)$ is a vertex of degree 2 in $\mathcal P$ and   $(1,k)$ is not adjacent to $(2,k)$ in $\mathcal P$, we must have  that
$(1,k-1)-(1,k)-(1,k+1)$ are edges of $\mathcal F$, and hence either case $B$ or case $E$ occurs. 
Similarly, if $(1,k)-(2,k)$ is in  $\mathcal F$, then we have either case $C$ or $F$; if $(2,k)-(2,k+1)$ is in $\mathcal F$, then we have case 
$D$; and if  $(2,k)-(2,k-1)$  is in  $\mathcal F$, then we have case $G$. \qed

\begin{figure}[h]
\begin{center}
\begin{tikzpicture} [scale=0.6]
\draw (1,1) grid (3,3);	
	\draw [line width=0.8mm] (1,1)--(1,3);
	\draw [line width=0.8mm] (3,1)--(3,3);
	\filldraw (2,2) circle (2.5pt);
	 \path node at (2,0) { $\mathcal A_1$ };
\path node at (2,3.5) { $A$ };
	\path node at (0.3,2) {$k$};
	\path node at (0.1,1) {$k+1$};
\path node at (0.1,3) {$k-1$};

  \draw (5,1) grid (7,3);
	\draw [line width=0.8mm] (5,1)--(5,3);
	\draw [line width=0.8mm] (6,2)--(7.05,2);
	\draw [line width=0.8mm] (7,1.95)--(7,3);
		\filldraw (6,2) circle (2.5pt);
		\path node at (6,3.5) { $B$ };

	\draw (8,1) grid (10,3);	
	\draw [line width=0.8mm] (10,1)--(10,3);
	\draw [line width=0.8mm] (7.95,2)--(9,2);
	\draw [line width=0.8mm] (8,1.95)--(8,3);
	\filldraw (9,2) circle (2.5pt);
 \path node at (9,3.5) { $C$ };
	\path node at (9,0) { $\mathcal A_2$ };
		
	\draw (11,1) grid (13,3);	
	\draw [line width=0.8mm] (11,1)--(11,3);
	\draw [line width=0.8mm] (12,1)--(12,2);
	\draw [line width=0.8mm] (13,1)--(13,3);
	\filldraw (12,2) circle (2.5pt);
	\path node at (12,3.5) { $D$ };

	\draw (15,1) grid (17,3);
	\draw [line width=0.8mm] (15,1)--(15,3);
	\draw [line width=0.8mm] (16,2)--(17.05,2);
	\draw [line width=0.8mm] (17,1)--(17,2.05);
	\filldraw (16,2) circle (2.5pt);
	\path node at (16,3.5) { $E$ };

	\draw (18,1) grid (20,3);
	\draw [line width=0.8mm] (20,1)--(20,3);
	\draw [line width=0.8mm] (18,1)--(18,2.05);
	\draw [line width=0.8mm] (18,2)--(19,2);
	\filldraw (19,2) circle (2.5pt);
	\path node at (19,3.5) { $F$ };
\path node at (19,0) { $\mathcal A_3$ };
	
	\draw (21,1) grid (23,3);
	\draw [line width=0.8mm] (21,1)--(21,3);
	\draw [line width=0.8mm] (22,3)--(22,2);
	\draw [line width=0.8mm] (23,1)--(23,3);
	\filldraw (22,2) circle (2.5pt);
	\path node at (22,3.5) { $G$ };
\end{tikzpicture}
\caption{Edges in $\mathcal F$ are denoted by bold lines.}
\label{slika1}
\end{center}
\end{figure}

\noindent We define three sets $\mathcal A_1=\{A\}, \mathcal A_2=\{B,C,D\}$ and $\mathcal A_3=\{E,F,G\}$. 
If  $(2,k)\in\mathcal E(\mathcal P)$, we say that (for example)  \textit{{$A$ occurs for $k$}}, if  $\mathcal F$ contains edges denoted by bold lines as    in 
 case $A$  of Fig.~\ref{slika1}. Similarly, we say (for example) that  
 {\textit{$\mathcal A_2$ occurs for $k$}}, if $B,C$ or $D$ occurs for $k$.

\begin{lemma}
\label{lemaliharazdalja}
If $(1,k)-(1,k+1)\in \mathcal F$, $(2,k)-(2,k+1)\notin \mathcal F$,  $(3,k)-(3,k+1)\in \mathcal F$ and  
$([3]\times \{k+1,k+2\})\cap  \mathcal {E(P)}= \emptyset$, then   
 $(1,k+2)-(1,k+3)\in \mathcal F$, $(2,k+2)-(2,k+3)\notin \mathcal F$ and $(3,k+2)-(3,k+3)\in \mathcal F$.
\end{lemma}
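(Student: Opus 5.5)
The argument is a local degree count in rows $k+1$ and $k+2$. The only facts used are that a path cover has maximum degree at most $2$, that every vertex outside $\mathcal E(\mathcal P)$ has degree exactly $2$ in $\mathcal P$, and that $\mathcal P$ is acyclic. By hypothesis no vertex of $[3]\times\{k+1,k+2\}$ is an endvertex, so all six of these vertices have degree exactly $2$ in $\mathcal F$. I will determine the edges of $\mathcal F$ row by row.

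\emph{Row $k+1$.} The edge $(2,k)-(2,k+1)$ is not in $\mathcal F$. Hence $(2,k+1)$ must get its two $\mathcal F$-edges from $(1,k+1)$, $(3,k+1)$ and $(2,k+2)$.

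The vertex $(1,k+1)$ already uses $(1,k)-(1,k+1)$, so it has exactly one further edge, either to $(2,k+1)$ or to $(1,k+2)$. Symmetrically, $(3,k+1)$ has exactly one further edge, either to $(2,k+1)$ or to $(3,k+2)$.

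I split into cases according to which two neighbours $(2,k+1)$ uses.

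\emph{Case 1: $(2,k+1)$ uses both $(1,k+1)$ and $(3,k+1)$.} Then $(1,k+1)$ and $(3,k+1)$ are saturated, and $(2,k+1)$ is saturated without the edge to $(2,k+2)$. Consequently none of the three vertical edges between rows $k+1$ and $k+2$ lies in $\mathcal F$. Each vertex of row $k+2$ then has only its row-$(k+2)$ neighbours and its neighbour in row $k+3$ available. For $(1,k+2)$ this means the edges to $(2,k+2)$ and $(1,k+3)$ are forced, and likewise $(3,k+2)$ forces the edges to $(2,k+2)$ and $(3,k+3)$. Now $(2,k+2)$ is saturated by $(1,k+2)$ and $(3,k+2)$, so $(2,k+2)-(2,k+3)\notin\mathcal F$. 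This is exactly the conclusion.

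\emph{Case 2: $(2,k+1)$ uses $(2,k+2)$ and one of $(1,k+1)$, $(3,k+1)$.} By symmetry, say it uses $(1,k+1)$. Then $(3,k+1)$ must use $(3,k+2)$.

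Row $k+2$ is then as follows. The vertex $(2,k+2)$ has one edge left, going to $(1,k+2)$, $(3,k+2)$ or $(2,k+3)$. The vertex $(1,k+2)$ is not joined to row $k+1$, so it needs both $(2,k+2)$ and $(1,k+3)$; in particular it takes the last edge of $(2,k+2)$. Next, $(3,k+2)$ is joined to $(3,k+1)$ and needs one more edge. That edge cannot go to $(2,k+2)$, which is saturated, so it is $(3,k+2)-(3,k+3)$.

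The conclusion holds in this case as well: $(1,k+2)-(1,k+3)$ and $(3,k+2)-(3,k+3)$ are in $\mathcal F$, while $(2,k+2)-(2,k+3)$ is not.

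I should check that no other option exists, namely that $(2,k+1)$ avoids $(2,k+2)$ only in Case 1. This is immediate, since its degree is $2$ and only three neighbours are available.

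The step I expect to be most error-prone is the bookkeeping of the mirror sub-case in Case 2, where $(2,k+1)$ uses $(3,k+1)$ instead. It is handled by the reflection $1\leftrightarrow 3$, which preserves all hypotheses and the conclusion. Acyclicity is not even needed: degree constraints alone force everything.
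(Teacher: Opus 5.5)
Your proof is correct and is essentially the paper's argument: a forced-degree case analysis on which neighbours saturate $(2,k+1)$, with your Case 1 and Case 2 (plus its mirror) matching the three configurations in the paper's figure. You also state explicitly what the paper leaves implicit: $(2,k+1)$ cannot avoid both horizontal edges in row $k+1$, since then its degree would be at most $1$.
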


\proof
We shall prove that  one of the three cases shown in 
 Fig.~\ref{slikaM1naprej} occurs.  Suppose that  $(1,k+1)-(2,k+1)-(3,k+1)\in \mathcal F$. By assumption   $(1,k+2)$ and $(3,k+2)$ are not contained in $ \mathcal {E(P)}$ and so they are both of degree 2 in some path of $\mathcal P$, which forces that 
$(1,k+3)-(1,k+2)-(2,k+2)-(3,k+2)-(3,k+3)\in \mathcal F$. Hence, we have the first (leftmost) case of  Fig.~\ref{slikaM1naprej}. Similarely, if   $(1,k+1)-(2,k+1)\in  \mathcal F$ and $(2,k+1)-(3,k+1)\notin \mathcal F$, the 
second case shown in Fig.~\ref{slikaM1naprej} is forced (we can see this by  using  the fact that $(1,k+2)\notin \mathcal{E}(\mathcal P)$). Finally, if 
$(1,k+1)-(2,k+1)\notin  \mathcal F$ and $(2,k+1)-(3,k+1)\in \mathcal F$
the third case of  Fig.~\ref{slikaM1naprej} is forced, which  proves the claim. 
\qed

\begin{figure}[h]
\begin{center}
\begin{tikzpicture} [scale=0.8]
{

	\draw (-3,1)--(-1,1); 	\draw (-3,2)--(-1,2);	\draw (-3,3)--(-1,3); \draw (-3,4)--(-1,4);	\draw (-3,1)--(-3,4);	\draw (-2,1)--(-2,4); \draw (-1,1)--(-1,4);
	\draw [line width=0.8mm] (-3,1)--(-3,2.05);
		\draw [line width=0.8mm] (-3,2)--(-0.95,2);
	\draw [line width=0.8mm] (-3,3)--(-0.95,3);
	          \draw [line width=0.8mm] (-3,3-0.05)--(-3,4);
		\draw [line width=0.8mm] (-1,2.95)--(-1,4);
                    \draw [line width=0.8mm] (-1,1)--(-1,2);
	\path node at (-4+0.6,4) {$k$}; 	\path node at (-4+0.3,2) {$k+2$};

	\draw (1,1)--(3,1); 	\draw (1,2)--(3,2);	\draw (1,3)--(3,3); \draw (1,4)--(3,4);	\draw (1,1)--(1,4);	\draw (2,1)--(2,4); \draw (3,1)--(3,4);
	\draw [line width=0.8mm] (1,1)--(1,2.05);
		\draw [line width=0.8mm] (1,2)--(2.05,2);
	\draw [line width=0.8mm] (2,2)--(2,3.05);
\draw [line width=0.8mm] (2,3)--(1,3);
	\draw [line width=0.8mm] (1,3-0.05)--(1,4);
		\draw [line width=0.8mm] (3,1)--(3,4);
	\path node at (0.65,4) {$k$}; 	\path node at (0.3,2) {$k+2$};

		\draw (5,1)--(7,1); 	\draw (5,2)--(7,2);	\draw (5,3)--(7,3); \draw (5,4)--(7,4);	\draw (5,1)--(5,4);	\draw (6,1)--(6,4); \draw (7,1)--(7,4);
	\draw [line width=0.8mm] (5,1)--(5,4);
		\draw [line width=0.8mm] (7,2)--(5.95,2);
	\draw [line width=0.8mm] (6,2)--(6,3.05);
\draw [line width=0.8mm] (6,3)--(7,3);
		\draw [line width=0.8mm] (7,1)--(7,2.05);
			\draw [line width=0.8mm] (7,3-0.05)--(7,4);
	\path node at (4.65,4) {$k$}; \path node at (4.3,2) {$k+2$};
}
\end{tikzpicture}
\caption{The three possibilites that  occur in Lemma \ref{lemaliharazdalja}. }
\label{slikaM1naprej}
\end{center}
\end{figure}

\begin{lemma}
\label{claim2} Suppose that  $(2,k),(2,k')\in\mathcal E(\mathcal P)$ and $(2,t)\notin\mathcal E(\mathcal P)$ for $k<t<k'$. 

\begin{itemize}
\item[(a)] If $\mathcal A_2$ occurs for $k$, then $\mathcal A_1$ or $\mathcal A_2$ does not occur for  $k'$.
\item[(b)] If $\mathcal A_1$ or $\mathcal A_3$ occurs for $k$, then $\mathcal A_3$ does not occur for  $k'$.
\item[(c)] If $\mathcal A_1$ or $\mathcal A_3$ occurs for $k$, and $\mathcal A_1$ or $\mathcal A_2$  occurs for  $k'$, then $k'-k$ is odd.
\end{itemize}
 \end{lemma}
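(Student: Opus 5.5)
The plan is to read off, from Fig.~\ref{slika1}, which vertical edges of $\mathcal F$ cross between consecutive rows just above and just below an endvertex $(2,k)$. Write $\mathrm{cut}(t)$ for the set of edges $(i,t)-(i,t+1)\in\mathcal F$, $i\in[3]$. Call a cut \emph{standard} if it consists exactly of the two outer edges $(1,t)-(1,t+1)$ and $(3,t)-(3,t+1)$.

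Inspecting the seven cases gives the following.
\begin{itemize}
\item Below $k$: cases $A,E,F,G$ (that is, $\mathcal A_1\cup\mathcal A_3$) give a standard $\mathrm{cut}(k)$. Cases $B,C$ give a cut with one edge, since one outer column is used and the middle one is not. Case $D$ gives a cut with three edges.
\item Above $k$: cases $A,B,C,D$ (that is, $\mathcal A_1\cup\mathcal A_2$) give a standard $\mathrm{cut}(k-1)$. Cases $E,F$ give one edge and case $G$ gives three edges.
\end{itemize}
So $\mathcal A_1\cup\mathcal A_3$ at $k$ means $|\mathrm{cut}(k)|$ is even, and $\mathcal A_2$ at $k$ means it is odd. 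Likewise, $\mathcal A_1\cup\mathcal A_2$ at $k'$ means $|\mathrm{cut}(k'-1)|$ is even, and $\mathcal A_3$ at $k'$ means it is odd.

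The key step is a parity argument. Every vertex in rows $k+1,\dots,k'-1$ has degree $2$ in $\mathcal P$: columns $1$ and $3$ never contain endvertices, and column $2$ contains none there by hypothesis. For $k<t<k'$, summing degrees over row $t$ gives $6=2e_t+|\mathrm{cut}(t-1)|+|\mathrm{cut}(t)|$, where $e_t$ is the number of horizontal edges in row $t$. Hence $|\mathrm{cut}(t-1)|\equiv|\mathrm{cut}(t)|\pmod 2$, and $|\mathrm{cut}(k)|\equiv|\mathrm{cut}(k'-1)|\pmod 2$. This already proves (a) and (b). In (a), the cut below $k$ is odd, so the cut above $k'$ is odd and $k'$ cannot be in $\mathcal A_1\cup\mathcal A_2$. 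In (b), the cut below $k$ is even, so the cut above $k'$ is even and $k'$ cannot be in $\mathcal A_3$.

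For (c), both $\mathrm{cut}(k)$ and $\mathrm{cut}(k'-1)$ are standard. Suppose $k'-k$ is even, say $k'=k+2m$ with $m\ge1$. Apply Lemma~\ref{lemaliharazdalja} repeatedly, starting from the standard $\mathrm{cut}(k)$. Its hypotheses hold at each step $j=0,\dots,m-2$, because rows $k+2j+1,k+2j+2<k'$ contain no endvertices. This yields that $\mathrm{cut}(k+2j+2)$ is standard, so in particular $\mathrm{cut}(k'-2)$ is standard.

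It remains to show that consecutive cuts $\mathrm{cut}(k'-2)$ and $\mathrm{cut}(k'-1)$ cannot both be standard when row $k'-1$ is endpoint-free. Suppose they are. Then $(2,k'-1)$ has no vertical $\mathcal F$-edges, so both horizontal edges at it lie in $\mathcal F$. But $(1,k'-1)$ is then incident to the two vertical edges and to $(1,k'-1)-(2,k'-1)$, giving degree $3$, a contradiction. Hence $k'-k$ is odd.

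The only delicate points are the case-by-case reading of Fig.~\ref{slika1} for the cut sizes and the boundary step of (c). I expect the latter to be the main obstacle: the iteration of Lemma~\ref{lemaliharazdalja} must stop exactly at $\mathrm{cut}(k'-2)$, and the final row $k'-1$ has to be handled by the direct degree argument above. The case $m=1$ is covered, since then $\mathrm{cut}(k'-2)=\mathrm{cut}(k)$ is standard directly.
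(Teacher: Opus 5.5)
Your proposal is correct and follows the paper's route. Parts (a) and (b) use a parity argument across the endpoint-free strip between rows $k$ and $k'$. Part (c) uses repeated application of Lemma~\ref{lemaliharazdalja}, closed off by ruling out consecutive standard cuts at row $k'-1$. Your row-by-row degree count handles the cases $k'=k+1$ and $k'=k+2$ more cleanly than the paper's endpoint count in $\mathcal P_R$, and your degree-$3$ contradiction justifies the paper's bare assertion that $k'\neq k+2$.
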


\proof 
(a) Suppose (reductio ad absurdum) that $B,C$ or $D$ occurs for $k$, and  $A, B, C$ or $D$ occurs for $k'$.
Let $R=P_3\Box P_{k+1}^{k'-1}$.  Since $(2,t)\notin\mathcal E(\mathcal P)$ for $k<t<k'$, we have  
$\mathcal E (\mathcal P_ R)\subseteq [3]\times \{k+1,k'-1\}$. Since $B, C$ or $D$ occurs for $k$, we have 
$\mathcal E (\mathcal P_ R)$ has an odd number of vertices in $[3]\times \{k+1\}$ (in cases $B$ or $C$ one vertex, and in case $D$ three vertices), 
and since $A,B,C$ or $D$ occurs for $k'$, 
$\mathcal E (\mathcal P_ R)$ has exactly two vertices in  $[3]\times \{k'-1\}$. Hence $|\mathcal E (\mathcal P_ R)|$ is odd, 
a contradiction (each path of $ \mathcal P_ R$ has exactly two vertices in  $\mathcal E (\mathcal P_ R)$). 

(b) The proof is similar to the proof of (a).

(c) Since $A,E,F$ or $G$ occurs for $k$, we have $(1,k)-(1,k+1)\in \mathcal F$, $(2,k)-(2,k+1)\notin \mathcal F$ and $(3,k)-(3,k+1)\in \mathcal F$. 
 Note that if $k'\neq k+1$, then $k'\neq k+2$. By Lemma \ref{lemaliharazdalja}, $(1,k+2)-(1,k+3)\in \mathcal F$, $(2,k+2)-(2,k+3)\notin \mathcal F$ and $(3,k+2)-(3,k+3)\in \mathcal F$,  and now we use Lemma \ref{lemaliharazdalja}
$\ell$ times (so $\ell$ times one of the three cases shown in  Fig.~\ref{slikaM1naprej}  occurs) until eventually $(2,k+2\ell+1)\in \mathcal {E(P)}$, and so $k+2\ell+1=k'$ and hence  $k'-k$ is odd. 
\qed 

Note that, roughly speaking, in all cases of  $\mathcal A_2$ we have an odd number of bold edges going downwards, and 
in all cases of $\mathcal A_1\cup \mathcal A_2$ we have an even number of bold edges going upwards; this discrepancy explains the above lemma.\\


\begin{lemma} \label{claim4} Suppose that  $(2,k),(2,k')\in\mathcal E(\mathcal P)$ and $(2,t)\notin\mathcal E(\mathcal P)$ for $t<k$ or $t>k'$, where $1\leq k \leq k'\leq n$. 
Then $\mathcal A_3$ does not occur for $k$, and  $\mathcal A_2$ does not occur for  $k'$. Moreover, $k$ and $k'$ are even.\end{lemma}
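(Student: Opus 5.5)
We will mirror the proof of Lemma \ref{claim2}, treating the boundary rows $1$ and $n$ as if there were a virtual endvertex of type $\mathcal A_1$ just outside the grid. Throughout, $k$ is the smallest and $k'$ the largest index $t$ with $(2,t)\in\mathcal E(\mathcal P)$. Such indices exist because $\mathcal P$ has at least one path and all endvertices lie in $\{2\}\times[n]$.

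\emph{$\mathcal A_3$ does not occur for $k$.} Suppose $E$, $F$ or $G$ occurs for $k$. Let $R=P_3\Box P_1^{k-1}$, the rows $1,\dots,k-1$; this is nonempty, since each of $E,F,G$ uses an edge from row $k$ to row $k-1$. No vertex of $R$ is an endvertex of $\mathcal P$, so every endvertex of $\mathcal P_R$ lies in row $k-1$ and is an endvertex of an edge of $\mathcal F$ leaving $R$ downward.
\begin{itemize}
\item In cases $E$ and $F$, exactly one of the edges $(1,k-1)-(1,k)$, $(3,k-1)-(3,k)$ is in $\mathcal F$, and $(2,k-1)-(2,k)\notin\mathcal F$.
\item In case $G$, all three such edges are in $\mathcal F$.
\end{itemize}
In either case an odd number of $\mathcal F$-edges leaves $R$. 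Hence $|\mathcal E(\mathcal P_R)|$ is odd, counted with multiplicity $2$ for trivial paths. This contradicts the fact that every path of $\mathcal P_R$ contributes exactly two endpoint-incidences. The argument that $\mathcal A_2$ does not occur for $k'$ is symmetric, using $R=P_3\Box P_{k'+1}^n$. Each of $B,C,D$ has an odd number of $\mathcal F$-edges going from row $k'$ to row $k'+1$, so the same parity contradiction arises. If $k'=n$, then $D$ is impossible outright, and $B,C$ would require an edge to row $n+1$.

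\emph{Parity of $k$.} Now $A$, $B$, $C$ or $D$ occurs for $k$. In each of these cases exactly two $\mathcal F$-edges go from row $k$ up to row $k-1$, namely those in columns $1$ and $3$, or none if $k=1$. We want to show $k=1$ is impossible, and more generally that $k$ is even, by propagating from the top row down as in Lemma \ref{lemaliharazdalja}.

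\begin{itemize}
\item \emph{Row $1$.} Vertices $(1,1)$ and $(3,1)$ have degree $2$ in $\mathcal P$. Each has only two neighbours in $G$, so $(1,2)-(1,1)-(2,1)-(3,1)-(3,2)\in\mathcal F$. In particular $(2,1)$ has degree $2$, so $k\neq 1$.
\item \emph{Row $2$.} If $k>2$, then row $2$ contains no endvertices. The vertices $(1,2)$ and $(3,2)$ need one more edge each, and $(2,2)$ needs two. Checking the possibilities as in the proof of Lemma \ref{lemaliharazdalja} shows that either $(2,2)-(2,3)\notin\mathcal F$ while $(1,2)-(1,3)$ and $(3,2)-(3,3)$ are in $\mathcal F$, or an analogous configuration holds. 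This yields the hypothesis of Lemma \ref{lemaliharazdalja} with $k$ replaced by $2$.
\item \emph{Propagation.} Iterating Lemma \ref{lemaliharazdalja} as in part (c) of Lemma \ref{claim2}, the pattern ``columns $1,3$ go down, column $2$ does not'' holds from every even row $2\ell$ to row $2\ell+1$, as long as no endvertex appears. The first row containing an endvertex must therefore be even, giving $k$ even.
\end{itemize}

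By the symmetry $t\mapsto n+1-t$, which preserves parity because $n$ is odd, the same argument from the bottom gives $n+1-k'$ even, so $k'$ is even.

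The main obstacle is the base step at rows $1$ and $2$. There we must verify carefully that the forced configuration really matches the hypothesis of Lemma \ref{lemaliharazdalja}, and that no alternative choice for $(2,2)$ lets an endvertex appear at an odd row. Handling this will require a short local case check on the two possible edges at $(2,2)$ other than $(2,1)$.
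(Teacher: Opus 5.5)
Your proof of the first assertion is correct and follows the paper's route; the paper just says it is similar to Lemma \ref{claim2}(a). You correctly count one or three $\mathcal F$-edges leaving $R$, for $E,F,G$ at $k$ and for $B,C,D$ at $k'$. The gap is in the parity part, which the paper does not prove but takes from Lemma 2.1 of \cite{spa}. The step you flagged as the main obstacle is not merely unverified; it is false.

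If $k>2$, then $(2,1)$ is already saturated by $(1,1)$ and $(3,1)$. So $(2,2)$ needs two $\mathcal F$-neighbours among $(1,2),(3,2),(2,3)$. Choosing $(1,2)$ and $(3,2)$ closes the cycle $(1,1)-(1,2)-(2,2)-(3,2)-(3,1)-(2,1)-(1,1)$, hence $(2,2)-(2,3)\in\mathcal F$ always. In the configuration you assert ($(1,2)-(1,3)$ and $(3,2)-(3,3)$ in $\mathcal F$, $(2,2)-(2,3)\notin\mathcal F$), the vertex $(2,2)$ would be isolated, i.e.\ an endvertex in row $2<k$. Consequently the hypothesis of Lemma \ref{lemaliharazdalja} never holds at index $2$. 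The pattern ``columns $1,3$ go down, column $2$ does not'' propagates through the odd rows ($2\ell-1\to 2\ell$), not the even ones.

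Your conclusion also does not follow from your own premise. If the pattern held from row $2\ell$ to $2\ell+1$, the argument of Lemma \ref{claim2}(c) would place the first endvertex at row $2\ell+(2m+1)$, which is odd. The ``virtual $\mathcal A_1$ just outside the grid'' heuristic has the same off-by-one: an $A$ at row $0$ would, via Lemma \ref{claim2}(c), predict an odd $k$.

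The repair is short and makes row $2$ unnecessary. Your row-1 computation already gives $(1,1)-(1,2)\in\mathcal F$, $(3,1)-(3,2)\in\mathcal F$ and $(2,1)-(2,2)\notin\mathcal F$. This is exactly the hypothesis of Lemma \ref{lemaliharazdalja} at index $1$, so row $1$ plays the role that an occurrence of $\mathcal A_1$ or $\mathcal A_3$ plays in the proof of Lemma \ref{claim2}(c). Then iterate: if row $r$ carries the pattern and row $r+1$ contains no endvertex, the three forced configurations show $(2,r+2)$ is saturated and row $r+2$ carries the pattern. Hence the first endvertex lies at row $1+(2\ell+1)$, which is even. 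Your reflection $t\mapsto n+1-t$, with $n+1$ even, then correctly gives $k'$ even. With this correction you would have a self-contained proof of the ``moreover'' part in place of the paper's citation.
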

\proof
The proof  of the first part is similar to the proof  of Lemma \ref{claim2}(a). The proof of the second part  is given in \cite{spa} (see Lemma 2.1 in  \cite{spa}).
\qed


 We finish this section with the proof of Lemma \ref{lemasrednji}. \\

\noindent
 {\bf Proof of Lemma  \ref{lemasrednji}.  }
Let us assume that  $(2,k),(2,k')\in\mathcal E(\mathcal P)$ and $(2,t)\notin\mathcal E(\mathcal P)$ for $k<t<k'$. 
We combine everything given in lemmas above (such as, for example, that case $D$ cannot occur  for $k'$, if $B$ occurs for $k$; in other words, $B$ and $D$ cannot occur one after another) and represent the claims of these lemmas  by a digraph, which we will denote by $T$. 
 We use the notation $\mathcal A_2(e)$ to denote that $B,C$ or $D$ occurs on an even position; all other notations are 
interpreted similarly.


\begin{figure}[h]
\begin{center}
\begin{tikzpicture} 
[->, >=stealth' ];
\filldraw[fill=blue] (1,1) circle (3pt);
\filldraw[fill=blue] (3,1) circle (3pt);
\filldraw[fill=red] (5,1) circle (3pt);
\filldraw[fill=red] (7,1) circle (3pt);
\filldraw[fill=red] (3,3) circle (3pt);
\filldraw[fill=blue] (5,3) circle (3pt);

\path node at (0.2,0.6) { $\mathcal A_2(e)$ };\path node at (3,0.6) { $\mathcal A_3(e)$ };\path node at (5,0.6) { $\mathcal A_2(o)$ };\path node at (7.8,0.6) { $\mathcal A_3(o)$ };\path node at (3,3.4) { $\mathcal A_1(o)$ };
\path node at (5,3.4) { $\mathcal A_1(e)$ };

\draw [->] (1.2,1) -- (2.8,1);
\draw [<->] (3.2,1) -- (4.8,1); 
\draw [->] (5.2,1) -- (6.8,1);
\draw [<-] (1.2,1.2) -- (2.8,2.8);
\draw [<->] (3.2,3) -- (4.8,3); 
\draw [->] (3,1.2) -- (3,2.8);
\draw [<-] (5,1.2) -- (5,2.8);
\draw [->] (6.8,1.2) -- (5.2,2.8);
\draw [<->](1,0.8) arc [start angle=180,
                                  end angle=360,
                                 x radius=3,
                                 y radius=0.8];
 
\end{tikzpicture}
\caption{The graph T.}
\label{slika3}
\end{center}

\end{figure}

First consider the vertex $\mathcal A_2(e)$. If $\mathcal A_2$ occurs for an even $k$, 
then by Lemma \ref{claim2}(a), $\mathcal A_1$ or $\mathcal A_2$ does not occur for $k'$. This explains why
there is no arc from $\mathcal A_2(e)$ to $\mathcal A_1(e), \mathcal \mathcal A_1(o), \mathcal A_2(e)$ or $\mathcal A_2(o)$. So the only possibility is that $\mathcal A_3$ occurs for $k'$; this explains the arcs 
$\mathcal A_2(e)\rightarrow \mathcal A_3(e)$ and $\mathcal A_2(e)\rightarrow \mathcal A_3(o)$. 

Now consider the vertex $\mathcal A_3(e)$ (and assume that $\mathcal A_3$ occurs for $k$, and that $k$ is even). By Lemma \ref{claim2}(b), $\mathcal A_3$ does not appear for $k'$, and by Lemma \ref{claim2}(c), $k'$ is odd. 
This explains why there are no arcs from $\mathcal A_3(e)$ to $\mathcal A_3(e),\mathcal A_3(o), \mathcal A_1(e)$ or $\mathcal A_2(e)$. 
The situations $\mathcal A_3(e)\rightarrow \mathcal A_1(o)$ and $\mathcal A_3(e)\rightarrow \mathcal A_2(o)$ are possible (so we have these two arcs in $T$).

Consider the vertex $\mathcal A_2(o)$ (and assume that $\mathcal A_2$ occurs for $k$, and that $k$ is odd). By Lemma \ref {claim2}(a), $\mathcal A_1$ or 
$\mathcal A_2$ does not appear for $k'$. This explains why there are no arcs from $\mathcal A_2(o)$ to 
$\mathcal A_1(e), \mathcal A_1(o), \mathcal A_2(e)$ or $\mathcal A_2(o)$. 

Consider the vertex $\mathcal A_3(o)$ (and assume that $\mathcal A_3$ occurs for $k$, and that $k$ is odd). By Lemma \ref{claim2}(b), $\mathcal A_3$ does not appear for $k'$, and by Lemma \ref{claim2}(c), $k'$ is even. This explains why there are no arcs from $\mathcal A_3(o)$ to 
$\mathcal A_3(e)$, $\mathcal A_3(o), \mathcal A_1(o)$ or $\mathcal A_2(o)$. 

 Consider the vertex $\mathcal A_1(o)$ (and assume that $\mathcal A_1$ occurs for $k$, and that $k$ is odd). By Lemma \ref{claim2}(b), $\mathcal A_3$ does not appear for $k'$, and by Lemma \ref{claim2}(c), $k'$ is even. 
This explains why there are no arcs from $\mathcal A_1(o)$ to $\mathcal A_3(e)$, $\mathcal A_3(o)$, $\mathcal A_1(o)$ or  $\mathcal A_2(o)$. 

Finally, consider the vertex $\mathcal A_1(e)$ (and assume that $\mathcal A_1$ occurs for $k$, and that $k$ is even). By Lemma \ref{claim2}(b), $\mathcal A_3$ does not appear for $k'$, and by Lemma \ref{claim2}(c), $k'$ is odd.  This explains why there are no arcs from $\mathcal A_1(e)$ to 
$\mathcal A_3(e), \mathcal A_3(o), \mathcal A_1(e)$ or  $\mathcal A_2(e)$. 

This gives a complete description of $T$. Observe that for a given path cover $\mathcal P$, each vertex of  $\mathcal E(\mathcal P)$ 
corresponds to a vertex of $T$ (since, by Lemma \ref{moznisredina}, for each  vertex of  $\mathcal E(\mathcal P)$  one of the cases  $\mathcal A_1(e),
 \mathcal A_1(o), \mathcal A_2(e), \mathcal A_2(o),\mathcal A_3(e)$ or  $\mathcal A_3(o)$ occurs). Moreover, the sequence 
$(2,k_1),\ldots,(2,k_r)$, where $k_1<\ldots<k_r$, of vertices in $\mathcal E(\mathcal P)$, corresponds to a directed path in $T$. 
We denote the vertices of this path by $x_1,\ldots,x_r$, and in the rest of this section, we analyse its  properties.

The vertices $x_1$ and $x_r\in V(T)$ correspond to $(2,k_1)$ and $(2,k_r)$, respectively.  
By Lemma \ref{claim4}  we have 
 $x_1\in\{\mathcal A_1(e), \mathcal A_2(e)\}:=\mathcal S$ and 
 $x_r\in \{\mathcal A_1(e), \mathcal A_3(e)\}:=\mathcal Q$ (here we consider $\mathcal A_1(e),
 \mathcal A_1(o), \mathcal A_2(e), \mathcal A_2(o),\mathcal A_3(e)$ and  $\mathcal A_3(o)$ as vertices of $T$). 

Let $I=\{i\, |\, x_i\in \mathcal S\}$ and $i_0=\max\{i\, |\, x_i\in \mathcal S\}$.
Suppose that $i$ and $j$ are integers such that $x_i, x_j\in \mathcal S$ and $x_\ell\notin \mathcal S$ for $i<\ell<j$. 
 Define $X_i=\{x_i,\ldots, x_{j-1}\}$ for $i\in I\setminus \{i_0\}$ and $X_{i_0}=\{x_{i_0},\ldots, x_{r}\}$.
 For $i\in I$ let

\begin{quote} 
$a_i=|X_i\cap \{\mathcal A_2(e), \mathcal A_3(e)\}|$, 
$ b_i=|X_i\cap \{\mathcal A_2(o), \mathcal A_3(o)\}|$,
$c_i=|X_i\cap \{\mathcal A_1(e)\}|$, 
$d_i=|X_i\cap \{\mathcal A_1(o)\}|$.
\end{quote}
Note that, by the definition given in Section \ref{poglavje2} (see also  Fig.~\ref{slika1}), we have  
\begin{equation}
\label{def}
|\mathcal V_e|=\sum_{i\in I}a_i, |\overline{\mathcal V_e}|=\sum_{i\in I}c_i,\\  |\mathcal V_o|=\sum_{i\in I}b_i\; \text{and}\; |\overline{\mathcal V_o}|=\sum_{i\in I}d_i.  
\end{equation} 

Recall that $x_{i}, x_j\in \mathcal S=\{\mathcal A_1(e),\mathcal A_2(e)\}$ and  $x_r\in \mathcal Q=\{\mathcal A_1(e), \mathcal A_3(e)\}$. We claim that $a_i+2c_i=b_i+2d_i$ for $i\in I\setminus \{{i_0}\}$  and $a_{i_0}+2c_{i_0}=b_{i_0}+2d_{i_0}+2$.   Suppose that   $x_i=\mathcal A_2(e)$. Then $x_{i+1}=\mathcal A_3(o)$ or $x_{i+1}=\mathcal A_3(e)$ (see graph T in Fig.~\ref{slika3}) and therefore either case  (1) or case (2) written below occurs. 
If $x_i=\mathcal A_1(e)$, then  case (3)  or case (4) occurs. 


\begin{itemize}
\item[(1)] $\mathcal A_2(e) \rightarrow  \mathcal A_3(o) \rightarrow  \mathcal S.$

\item[(2)] $\mathcal A_2(e) \rightarrow  \mathcal A_3(e) \rightarrow\underbrace{\mathcal A_2(o) \rightarrow \mathcal A_3(e)\rightarrow \ldots \rightarrow  \mathcal A_2(o) \rightarrow \mathcal A_3(e)}_{\text{some number (possibly 0) of repetitions of } \;  \mathcal A_2(o) \rightarrow \mathcal A_3(e)}$ and then one of the following possibilities occurs:

\begin{itemize}
\item[(2.1)] $\mathcal A_1(o)\rightarrow \mathcal S.$ 
\item[(2.2)]  $\mathcal A_2(o) \rightarrow  \mathcal A_3(o)\rightarrow \mathcal S.$ 
\end{itemize}

\item[(3)] $\mathcal A_1(e) \rightarrow  \mathcal A_1(o) \rightarrow \mathcal S.$
\item[(4)] $\mathcal A_1(e) \rightarrow  \underbrace{\mathcal A_2(o) \rightarrow \mathcal A_3(e)\rightarrow \ldots \rightarrow  \mathcal A_2(o) \rightarrow \mathcal A_3(e)}_{\text{some number (possibly 0) of repetitions of } \;  \mathcal A_2(o) \rightarrow \mathcal A_3(e)}$ and then we have the subcases (2.1) or (2.2).

\end{itemize}
Now we return to the proof of the claim, that $a_i+2c_i=b_i+2d_i$ for $i\in I\setminus \{{i_0}\}$. In case (1),  $\mathcal A_2(e)$ contributes 1 to $a_i+2c_i$ and $\mathcal A_3(o)$ contributes 1 to $b_i+2d_i$.  Hence, $a_i+2c_i=b_i+2d_i$ is true in this case.  In case (2), $\mathcal A_2(e)$ and  $\mathcal A_3(e)$ together contribute 2 to $a_i+2c_i$. Note that each repetition of  $\mathcal A_2(o) \rightarrow \mathcal A_3(e)$ contributes 1 to both  $a_i+2c_i$ and  $b_i+2d_i$. After  reaching the last occurrence of $\mathcal A_3(e)$,
the contribution of vertices in $X_i$ to $a_i+2c_i$ is two more than the contribution to $b_i+2d_i$. In subcase (2.1), $\mathcal A_1(o)$ contributes 2 to $b_i+2d_i$, and in subcase (2.2), $\mathcal A_2(o) \rightarrow  \mathcal A_3(o)$ contributes 2 to $b_i+2d_i$. Therefore, the claim $a_i+2c_i=b_i+2d_i$ holds in case (2). 

This claim also holds in case (3), since $\mathcal A_1(e) \rightarrow  \mathcal A_1(o)$  contributes  2 to both  $a_i+2c_i$ and $b_i+2d_i$. In case (4),  $\mathcal A_1(e)$ contributes 2 to $a_i+2c_i$ and  each repetition of  $\mathcal A_2(o) \rightarrow \mathcal A_3(e)$ contributes equally to $a_i+2c_i$ and $b_i+2d_i$. When we reach  the last occurrence of $\mathcal A_3(e)$, the contribution to $a_i+2c_i$ is two more than the contribution to $b_i+2d_i$. Since the path $X_i$ ends with one of the subcases (2.1) or (2.2), which we have already dealt above, we conclude that 
$a_i+2c_i=b_i+2d_i$.


Now consider the path $\{x_{i_0},\ldots, x_{r}\}$. We claim that $a_{i_0}+2c_{i_0}=b_{i_0}+2d_{i_0}+2$. Recall that $x_r\in \mathcal Q=\{\mathcal A_1(e), \mathcal A_3(e)\}$  and suppose first that $x_r=\mathcal A_1(e)\in \mathcal S$. Note that by the claim proved above vertices $ x_{i_0},\ldots, x_{r-1}$ contribute equally to $a_{i_0}+2c_{i_0}$ and to $b_{i_0}+2d_{i_0}+2$. The vertex $x_r$ contributes 2 to  $a_{i_0}+2c_{i_0}$. Hence,  $a_{i_0}+2c_{i_0}=b_{i_0}+2d_{i_0}+2$. If  $x_r=\mathcal A_3(e)$, the proof is similar (case (2) and case (4) above). 

To finish the proof of Lemma \ref{lemasrednji} we use both claims, namely  that $a_{i_0}+2c_{i_0}=b_{i_0}+2d_{i_0}+2$ and that
$a_{i}+2c_{i}=b_{i}+2d_{i}$ for $i\in I\setminus \{i_0\}$. By  \eqref{def} we have (see also the definition of $\mathcal V_e$ and  $\mathcal   V_o$ in Section  \ref{poglavje2}):
\begin{eqnarray*} 
|\mathcal V_e|+2|\overline{\mathcal V}_e|&=&a_{i_0}+2c_{i_0}+\sum_{i\in I\setminus\{i_0\}}a_i+\sum_{i\in I\setminus\{i_0\}}2c_i\\
&=&b_{i_0}+2d_{i_0}+2+\sum_{i\in I\setminus\{i_0\}}b_i+\sum_{i\in I\setminus\{i_0\}}2d_i\\
&=&\sum_{i\in I}b_i+2\sum_{i\in I}d_i+2\\
&=& |\mathcal V_o|+2|\overline{\mathcal V}_o|+2.
\end{eqnarray*}


\section {Proof of Lemma \ref{lemarob} }

The aim of this section is to prove Lemma \ref{lemarob}.

Throughout this section we   assume that $n$ is  an odd integer, $G=P_3\Box P_n$ and $\mathcal P$ is a path cover of $G$ such that $\mathcal {E(P)}\subseteq \{3\}\times [n]$. 
We will prove that 
  $$|\mathcal V_e|+2|\overline{\mathcal V}_e|+2=|\mathcal V_o|+2|\overline{\mathcal V}_o|.$$ 

%
%
We start with a few    preparatory results.

\begin{lemma} \label{moznih19}
 If $(3,k)\in\mathcal E(\mathcal P)$ then one of the nineteen cases shown in Fig.~\ref{slika4} occurs. 
\end {lemma}

\begin{figure}[h]
\begin{center}
\begin{tikzpicture} [scale=0.6]
	
\draw (1,1) grid (3,3);	
\path node at (2,3.6) { $N$ };
\filldraw (3,2) circle (2.5pt);
\draw [line width=0.8mm] (1,1.95)--(1,3);
\draw [line width=0.8mm] (0.95,2)--(2.05,2);
\draw [line width=0.8mm] (2,2.05)--(2,1);
\draw [line width=0.8mm] (3,1)--(3,2);
\path node at (0.3,2) {$k$};
	\path node at (0.1,1) {$k+1$};
\path node at (0.1,3) {$k-1$};

\draw (4,1) grid (6,3);
\path node at (5,3.6) { $O$ };
\filldraw (6,2) circle (2.5pt);
\draw [line width=0.8mm] (4,1)--(4,3);
\draw [line width=0.8mm] (5,1)--(5,2.05);
\draw [line width=0.8mm] (4.95,2)--(6,2);

\draw (7,1) grid (9,3);
\path node at (8,3.6) { $P$ };
\filldraw (9,2) circle (2.5pt);
\draw [line width=0.8mm] (7,1)--(7,2.05);
\draw [line width=0.8mm] (6.95,2)--(8.05,2);
\draw [line width=0.8mm] (8,1)--(8,2.05);
\draw [line width=0.8mm] (9,2)--(9,3);

\path node at (9.5,0.2) { $\mathcal B_7$ };

\draw (10,1) grid (12,3);
\path node at (11,3.6) { $Q$ };
\filldraw (12,2) circle (2.5pt);
\draw [line width=0.8mm] (10,1)--(10,3);
\draw [line width=0.8mm] (11,1)--(11,3);
\draw [line width=0.8mm] (12,2)--(12,3);

\draw (13,1) grid (15,3);
\path node at (14,3.6) { $R$ };
\filldraw (15,2) circle (2.5pt);
\draw [line width=0.8mm] (12.95,2)--(15,2);
\draw [line width=0.8mm] (13,1.95)--(13,3);

\draw (16,1) grid (18,3);
\path node at (17,3.6) { $S$ };
\filldraw (18,2) circle (2.5pt);
\draw [line width=0.8mm] (16,1.95)--(16,3);
\draw [line width=0.8mm] (15.95,2)--(17.05,2);
\draw [line width=0.8mm] (17,1.95)--(17,3);
\draw [line width=0.8mm] (18,2)--(18,3);


\draw (1,6) grid (3,8);	
\path node at (2,8.6) { $K$ };
\filldraw (3,7) circle (2.5pt);
\draw [line width=0.8mm] (1,6)--(1,8);
\draw [line width=0.8mm] (2,6)--(2,8);
\path node at (0.3,7) {$k$};
	\path node at (0.1,6) {$k+1$};
\path node at (0.1,8) {$k-1$};

\draw (4,6) grid (6,8);
\path node at (5,8.6) { $L$ };
\filldraw (6,7) circle (2.5pt);
\draw [line width=0.8mm] (4,6.95)--(4,8);
\draw [line width=0.8mm] (5,6.95)--(5,8);
\draw [line width=0.8mm] (3.95,7)--(5.05,7);

\path node at (5,5.2) { $\mathcal B_6$ };

\draw (7,6) grid (9,8);
\path node at (8,8.6) { $M$ };
\filldraw (9,7) circle (2.5pt);
\draw [line width=0.8mm] (7,6)--(7,7.05);
\draw [line width=0.8mm] (8,6)--(8,7.05);
\draw [line width=0.8mm] (6.95,7)--(8.05,7);

\draw (1,11) grid (3,13);
\path node at (2,13.6) { $E$ };
\filldraw (3,12) circle (2.5pt);
\draw [line width=0.8mm] (1,11)--(1,12.05);
\draw [line width=0.8mm] (0.95,12)--(2.05,12);
\draw [line width=0.8mm] (2,11.95)--(2,13);
\draw [line width=0.8mm] (3,12)--(3,13);
\path node at (0.3,12) {$k$};
	\path node at (0.1,11) {$k+1$};
\path node at (0.1,13) {$k-1$};
	
\draw (4,11) grid (6,13);
\path node at (5,13.6) { $F$ };
\filldraw (6,12) circle (2.5pt);
\draw [line width=0.8mm] (4,11)--(4,13);
\draw [line width=0.8mm] (5,11.95)--(5,13);
\draw [line width=0.8mm] (4.95,12)--(6,12);

\draw (7,11) grid (9,13);
\path node at (8,13.6) { $G$ };
\filldraw (9,12) circle (2.5pt);
\draw [line width=0.8mm] (7,11)--(7,13);
\draw [line width=0.8mm] (8,11)--(8,13);
\draw [line width=0.8mm] (9,11)--(9,12);

\path node at (9.5,10.2) { $\mathcal B_5$ };

\draw (10,11) grid (12,13);
\path node at (11,13.6) { $H$ };
\filldraw (12,12) circle (2.5pt);
\draw [line width=0.8mm] (10,11)--(10,12.05);
\draw [line width=0.8mm] (9.95,12)--(12,12);

\draw (13,11) grid (15,13);
\path node at (14,13.6) { $I$ };
\filldraw (15,12) circle (2.5pt);
\draw [line width=0.8mm] (13,11.95)--(13,13);
\draw [line width=0.8mm] (14,11.95)--(14,13);
\draw [line width=0.8mm] (12.95,12)--(14.05,12);
\draw [line width=0.8mm] (15,12)--(15,11);

\draw (16,11) grid (18,13);
\path node at (17,13.6) { $J$ };
\filldraw (18,12) circle (2.5pt);
\draw [line width=0.8mm] (16,11)--(16,12.05);
\draw [line width=0.8mm] (17,11)--(17,12.05);
\draw [line width=0.8mm] (18,11)--(18,12);
\draw [line width=0.8mm] (15.95,12)--(17.05,12);

\draw (1,16) grid (3,18);
\path node at (2,15.2) { $\mathcal B_1$ };
\path node at (2,18.6) { $A$ };
\filldraw (3,17) circle (2.5pt);
\draw [line width=0.8mm] (1,16)--(1,17.05); 
\draw [line width=0.8mm] (0.95,17)--(2.05,17); 
\draw [line width=0.8mm] (2,16.95)--(2,18); 
\draw [line width=0.8mm] (3,16)--(3,17); 
\path node at (0.3,17) {$k$};
	\path node at (0.1,16) {$k+1$};
\path node at (0.1,18) {$k-1$};

\draw (4,16) grid (6,18);
\path node at (5,18.6) { $B$ };
\path node at (5,15.2) { $\mathcal B_2$ };
\filldraw (6,17) circle (2.5pt);
\draw [line width=0.8mm] (4,16)--(4,17.05);
\draw [line width=0.8mm] (5,18)--(5,16.95);
\draw [line width=0.8mm] (3.95,17)--(5.05,17);

\draw (7,16) grid (9,18);
\path node at (8,18.6) { $C$ };
\path node at (8,15.2) { $\mathcal B_3$ };
\filldraw (9,17) circle (2.5pt);
\draw [line width=0.8mm] (7,16.95)--(7,18);
\draw [line width=0.8mm] (8,17.05)--(8,16);
\draw [line width=0.8mm] (6.95,17)--(8.05,17);
\draw [line width=0.8mm] (9,17)--(9,18);

\draw (10,16) grid (12,18);
\path node at (11,18.6) { $D$ };
\path node at (11,15.2) { $\mathcal B_4$ };
\filldraw (12,17) circle (2.5pt);
\draw [line width=0.8mm] (10,16.95)--(10,18);
\draw [line width=0.8mm] (11,17.05)--(11,16);
\draw [line width=0.8mm] (9.95,17)--(11.05,17);

\end{tikzpicture}
\caption{Edges in $\mathcal F$ are denoted by bold lines, nineteen cases.}
\label{slika4}
\end{center}

\end{figure}

\proof
If $(3,k)\in \mathcal E_1(\mathcal P)$, then we have several possibilities.
If $(3,k)-(3,k+1)$ is an edge of $\mathcal F$, then $(2,k)-(3,k)$ and $(3,k-1)-(3,k)$ are definitely not edges of $\mathcal F$. In the case when $(1,k)-(2,k)$ is an edge of $\mathcal F$, we have one of the cases $A$, $I$, $J$ or $N$;  otherwise, when $(1,k)-(2,k)$ is not an edge of $\mathcal F$,
vertices $(1,k)$ and $(2,k)$ are both of degree $2$ in $G_{\mathcal P}$ and so we have case $G$. 
If $(3,k-1)-(3,k)$ is an edge of $\mathcal F$, symmetrically we get 
the cases $C$, $E$, $P$, $S$ and $Q$.
If $(2,k)-(3,k)$ is an edge of $\mathcal F$, then $(3,k-1)-(3,k)$ and $(3,k)-(3,k+1)$ are not edges of $\mathcal F$. In case when $(1,k)-(2,k)$ is an edge 
of $\mathcal F$, we have  case $R$ or $H$, otherwise we have  case $F$ or $O$. \\
If $(3,k)\in \mathcal E_0(\mathcal P)$, then $(2,k)-(3,k)$, $(3,k)-(3,k-1)$ and $(3,k)-(3,k+1)$ are not edges of $\mathcal F$. In the case when $(1,k)-(2,k)$ is an edge of $\mathcal F$, we have one of the cases $B$, $D$, $L$ or $M$; otherwise, if $(1,k)-(2,k)$ is not an edge of $\mathcal F$, we have case $K$.
 \qed

We define seven sets 

\begin{quote} 
$\mathcal B_1=\{A\}\, , \, \mathcal B_2=\{B\}\, , \, \mathcal B_3=\{C\}\, , \, \mathcal B_4=\{D\}\, , \, \mathcal B_5=\{E,F,G,H,I,J\}$\\
$ \mathcal B_6=\{K,L,M\}$ and $\mathcal B_7=\{N,O,P,Q,R,S\}$
\end{quote}


\begin{lemma} \label{paj}
Suppose that  $(3,k),(3,k')\in\mathcal E(\mathcal P)$ and $(3,t)\notin\mathcal E(\mathcal P)$ for $k<t<k'$. 
\begin{itemize}
\item[(a)] If $\mathcal B_2$ or $\mathcal B_5$ occurs for $k$, then $\mathcal B_1,\mathcal B_2,\mathcal B_3, \mathcal B_5$ or $\mathcal B_6$ does not occur for  $k'$.
\item[(b)] If $\mathcal B_1, \mathcal B_6$ or $\mathcal B_7$ occurs for $k$, then $\mathcal B_1,\mathcal B_2,\mathcal B_4$ or $\mathcal B_7$ does not occur for  $k'$.   
\item[(c)] If  $\mathcal B_1$ occurs for $k$, then either $\mathcal B_3$ occurs for $k'$ and $k'-k$ is odd, or 
$\mathcal B_5$ or $\mathcal B_6$ occurs for  $k'$ and $k'-k$ is even.  
 \item[(d)] Suppose that $\mathcal B_6$ or $\mathcal B_7$ occurs for $k$. If $\mathcal B_5$ or $\mathcal B_6$ occurs for  $k'$, then $k'-k$ is odd, and if $\mathcal B_3$ occurs for  $k'$, then $k'-k$ is even.   
 \item[(e)] If $\mathcal B_3$ or $\mathcal B_4$ occurs for $k$, then $\mathcal B_1$ or $\mathcal B_2$ occurs for  $k'=k+1$. 
\end{itemize}
\end {lemma}

\proof 
\begin{itemize}
\item[(a)]  If $\mathcal B_2$ or $\mathcal B_5$ occurs for $k$ and $\mathcal B_3, \mathcal B_5$ or $\mathcal B_6$ occurs for $k'$, then as in the proof of Lemma \ref{claim2}, we find  that $|\mathcal E (\mathcal P_ R)|$ is odd, where 
$R=P_3\Box P_{k+1}^{k'-1}$ (see Lemma \ref{claim2} for details), which is a contradiction. Otherwise, if   $\mathcal B_1$ or $\mathcal B_2$ occurs for  $k'$, then it is easy to see that either  $(1,k'-1)$
or $(3,k'-1)$ is not covered. 

\item[(b)]  The proof is similar to the proof of Lemma \ref{claim2}. 

\item[(c)] Suppose that $\mathcal B_1$ occurs for $k$. Note that by (b), if $\mathcal B_1$ occurs for $k$, then $\mathcal B_3, \mathcal B_5$ or $\mathcal B_6$ occurs for $k'$. Let  $\mathcal B_3$ occurs for $k'$. We have $(1,k)-(1,k+1)\in \mathcal F$, $(2,k)-(2,k+1)\notin \mathcal F$ and $(3,k)-(3,k+1)\in \mathcal F$
Since  $(3,t)\notin\mathcal E(\mathcal P)$ for $k<t<k'$, we can apply Lemma \ref{lemaliharazdalja} multiple times, say $\ell$ times, until $\mathcal B_3$ occurs for $k'$, and in this case  $k'=k+2\ell+1$. Therefore $k'-k$ is odd. If $\mathcal B_5$ or $\mathcal B_6$ occurs for  $k'$, then the proof is similar (we use Lemma \ref{lemaliharazdalja}). 

\item[(d)]  The method of proving case (d) is similar to the proof of case (c) (we use Lemma \ref{lemaliharazdalja}). 

\item[(e)] Since $\mathcal B_3$ or $\mathcal B_4$ occurs for $k$, we have $(1,k)-(1,k+1)\notin \mathcal F$, $(2,k)-(2,k+1)\in \mathcal F$ and $(3,k)-(3,k+1)\notin \mathcal F$.
By assumption   $(1,k+1)$ is not contained in $\mathcal {E(P)}$ and so it is of degree 2 in some path of $\mathcal P$, which forces that 
$(1,k+2)-(1,k+1)-(2,k+1)\in \mathcal F$ and hence $(2,k+1)-(3,k+1)\notin \mathcal F$. So we have $(3,k+1)\in \mathcal {E(P)}$. 
If $(3,k+1)-(3,k+2)\in \mathcal F$, then case $\mathcal B_1$ occurs for $k+1$;  otherwise  $\mathcal B_2$ occurs for $k+1$.
\qed 

\end{itemize}

\begin{lemma}\label{zacetnizadnji}
 Suppose that  $(3,k),(3,k')\in\mathcal E(\mathcal P)$ and $(3,t)\notin\mathcal E(\mathcal P)$ for $t<k$ or $t>k'$, where $1\leq k \leq k'\leq n$. 
Then $\mathcal B_1, \mathcal B_2,\mathcal B_4$ or $\mathcal B_7$ does not occur for $k$, and  $\mathcal B_2,\mathcal B_3,\mathcal B_4$  or $\mathcal B_5$ does not occur for  $k'$. Moreover, if $\mathcal B_3$ occurs for $k$, then $k$ is even and if $\mathcal B_5$ or $\mathcal B_6$ occurs for $k$, then $k$ is odd.  
If $\mathcal B_1$ occurs for $k'$, then $k'$ is even and if $\mathcal B_6$ or $\mathcal B_7$ occurs for $k'$, then $k'$ is odd.  
\end{lemma}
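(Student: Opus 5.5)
The plan is a parity and colour-counting argument on the part of $P_3\Box P_n$ lying strictly before $k$ (respectively strictly after $k'$). That part contains no endvertices of $\mathcal P$, so the restriction of $\mathcal P$ to it is very rigid.

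\emph{The statement about $k$.} Let $R=P_3\Box P_{k-1}$ be the subgraph on rows $1,\dots,k-1$. By hypothesis no vertex of $R$ lies in $\mathcal E(\mathcal P)$, so every vertex of $R$ has degree $2$ in $\mathcal P$. Consider the components of $\mathcal P_R$.
\begin{itemize}
\item A component cannot be a cycle, since $\mathcal P$ is a path cover.
\item A component cannot be a trivial path, since a vertex of row $k-1$ has at most one $\mathcal F$-edge leaving $R$ (down to row $k$), so it would have degree at most $1$.
\item Hence every component is a nontrivial path, and both of its endvertices lie in row $k-1$ and are joined by an $\mathcal F$-edge to row $k$.
\end{itemize}
Consequently the number of bold edges going upward from row $k$ is even. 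If it is $0$, then $R$ is empty, i.e.\ $k=1$.

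Now inspect Fig.~\ref{slika4} case by case.
\begin{itemize}
\item Every case in $\mathcal B_1,\mathcal B_2,\mathcal B_4,\mathcal B_7$ has an odd number of upward bold edges. Cases $A,B,D,N,O,P,R$ have one such edge; cases $Q,S$ have three. This excludes all of them for $k$.
\item If exactly two upward edges exist, $\mathcal P_R$ is a single Hamilton path of $R$ with endvertices in row $k-1$ at the two columns where the upward edges leave.
\end{itemize}
Colour $P_3\Box P_{k-1}$ bipartitely. A Hamilton path whose ends have the same colour has an odd number of vertices. One whose ends have different colours has an even number.
\begin{itemize}
\item In case $C$ ($\mathcal B_3$) the upward edges are in columns $1$ and $3$, so the ends have the same colour. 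Thus $3(k-1)$ is odd and $k$ is even.
\item In cases $E,F,G,I,K,L$ ($\mathcal B_5,\mathcal B_6$) the upward edges are in adjacent columns ($1,2$ or $2,3$). Thus $3(k-1)$ is even and $k$ is odd.
\item In cases $H,J,M$ there are no upward edges, so $k=1$, which is odd.
\end{itemize}

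\emph{The statement about $k'$.} Apply the same argument to the rows $k'+1,\dots,n$, now counting downward bold edges from row $k'$.
\begin{itemize}
\item Cases $B,C,D$ and all of $\mathcal B_5$ have an odd number of downward edges, so $\mathcal B_2,\mathcal B_3,\mathcal B_4,\mathcal B_5$ cannot occur for $k'$.
\item In case $A$ ($\mathcal B_1$) the downward edges are in columns $1$ and $3$. Hence $3(n-k')$ is odd. Since $n$ is odd, $k'$ is even.
\item In $\mathcal B_6\cup\mathcal B_7$ the downward edges are in adjacent columns or absent (in the latter case $k'=n$). Hence $n-k'$ is even and $k'$ is odd.
\end{itemize}

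\emph{Main obstacle.} The step most likely to cause trouble is the careful reading of the nineteen pictures in Fig.~\ref{slika4}. For each case one must record which columns carry a bold vertical edge upward and which downward. The whole argument reduces to that bookkeeping plus the two facts established above: the even count of edges crossing a row boundary, and the bipartite parity of Hamilton paths in $P_3\Box P_m$.
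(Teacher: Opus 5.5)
Your proof is correct, and the case bookkeeping agrees with Fig.~\ref{slika4}. The first part is essentially the paper's argument. The paper says it is ``similar to Lemma~\ref{paj}(a)'', meaning a parity count of $\mathcal E(\mathcal P_R)$ on the endpoint-free region $R$. Your count of crossing $\mathcal F$-edges is the same idea, and your exclusion of trivial components of $\mathcal P_R$ is what justifies ``each component contributes exactly two''. For the parity statements, the paper gives no argument beyond a pointer to Lemma~2.1 of \cite{spa}. Where it proves parity facts itself (Lemma~\ref{claim2}(c), Lemma~\ref{paj}(c),(d)), it iterates Lemma~\ref{lemaliharazdalja}, propagating forced configurations two rows at a time. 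You instead use a single global count. Two crossing edges force $\mathcal P_R$ to be one Hamilton path of $P_3\Box P_{k-1}$ (resp.\ $P_3\Box P_{n-k'}$) with prescribed ends. Bipartite colour parity then fixes the parity of $3(k-1)$ (resp.\ $3(n-k')$). The cases with no crossing edges ($H,J,M$ at $k$; $L,R,S$ at $k'$) are handled by the observation that the region must then be empty. This makes the lemma self-contained. The propagation method yields more, namely the row-by-row shape of $\mathcal P$, but that extra information is not needed here.
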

\proof
The proof  of the first part is similar to the proof of Lemma \ref{paj}(a). 
The proof of the moreover part of the lemma is similar to the proof of 
Lemma 2.1 in  \cite{spa}. \qed

 \begin{figure}[h]
\begin{center}
\begin{tikzpicture} [scale=0.95]

\filldraw[fill=red] (1,1) circle (3pt);
\filldraw[fill=blue] (1,3) circle (3pt);
\filldraw (1,5) circle (3pt);
\filldraw[fill=red] (1,7) circle (3pt);
\filldraw[fill=blue] (3,1) circle (3pt);
\filldraw[fill=red] (3,3) circle (3pt);
\filldraw[fill=red] (3,5) circle (3pt);
\filldraw[fill=red] (5,1) circle (3pt);
\filldraw[fill=blue] (5,5) circle (3pt);
\filldraw[fill=blue] (7,1) circle (3pt);
\filldraw[fill=blue] (7,3) circle (3pt);
\filldraw[fill=red] (7,5) circle (3pt);
\filldraw[fill=blue] (9,1) circle (3pt);
\filldraw (9,3) circle (3pt);
\filldraw[fill=red] (9,5) circle (3pt);
\filldraw[fill=blue] (9,7) circle (3pt);

\path node at (1,0.5) { $\mathcal B_3(e)$ }; \path node at (3,0.5) { $\mathcal B_2(o)$ };
\path node at (5,0.5) { $\mathcal B_4(e)$ }; \path node at (7,0.5) { $\mathcal B_5(o)$ };
\path node at (9,0.5) { $\mathcal B_6(o)$ };
\path node at (0.3,3) { $\mathcal B_1(o)$ }; \path node at (0.3,5) { $\mathcal{S}$ };
\path node at (0.3,7) { $\mathcal B_6(e)$ };
\path node at (9.5,3) { $\mathcal{R}$ }; \path node at (9.8,5) { $\mathcal B_1(e)$ };
\path node at (9.8,7) { $\mathcal B_3(o)$ };
\path node at (3,5.5) { $\mathcal B_5(e)$ }; \path node at (5,5.5) { $\mathcal B_7(o)$ };
\path node at (6.7,5.5) { $\mathcal B_2(e)$ };
\path node at (2.4,2.9) { $\mathcal B_7(e)$ };
\path node at (7.8,2.9) { $\mathcal B_4(o)$ };

\draw [->] (1,1.2) -- (1,2.8); 
\draw [->] (1,3.2) -- (1,4.8);
\draw [<-] (1,5.2) -- (1,6.8);
\draw [->] (3,1.2) -- (3,2.8); 
\draw [<-] (3,3.2) -- (3,4.8);
\draw [->] (7,1.2) -- (7,2.8); 
\draw [<->] (7,3.2) -- (7,4.8);
\draw [->] (9,1.2) -- (9,2.8); 
\draw [<-] (9,3.2) -- (9,4.8);
\draw [<-] (9,5.2) -- (9,6.8);

\draw [->] (1.2,1) -- (2.8,1); 
\draw [<->] (3.2,1) -- (4.8,1); 
\draw [<-] (5.2,1) -- (6.8,1); 
\draw [->] (3.2,5) -- (4.8,5); 
\draw [<-] (5.2,5) -- (6.8,5); 

\draw [->] (4.8,1.2) -- (1.2,2.8); 
\draw [->] (2.8,3.2) -- (1.2,4.8);
\draw [->] (3.15,1.2) -- (4.8,4.8);
\draw [->] (3.25,1.15) -- (6.8,2.8);
\draw [<-] (5,1.2) -- (3.1,4.8);
\draw [<-] (5.2,1.2) -- (6.85,4.8);
\draw [->] (6.75,1.1) -- (3.2,2.8);
\draw [->] (6.85,1.2) -- (5,4.8);
\draw [<-] (3.2,3.2) -- (6.8,4.9);
\draw [->] (3.2,4.85) -- (6.8,3.2);
\draw [<-] (7.2,5.2) -- (8.8,6.8);
\draw [->] (7.2,3.2) -- (8.8,4.8);
\draw [->] (5.2,4.8) -- (8.8,3.2);

\end{tikzpicture}
\caption{The graph $T'$. }
\label{slika5}
\end{center}
\end{figure}

Let  
$\mathcal S= \{\mathcal B_3(e), \mathcal B_5(o),\mathcal B_6(o)\}$ and  $\mathcal R=\{ \mathcal B_3(o),\mathcal B_5(e),\mathcal B_6(e)\}$.
We now assume that  $(3,k),(3,k')\in\mathcal E(\mathcal P)$ and $(3,t)\notin\mathcal E(\mathcal P)$ for $k<t<k'$ . 
We combine everything given in Lemma \ref{paj} above 
 and represent the claims of Lemma \ref{paj}  by a digraph, which we will denote by $T'$ (see Fig.~\ref{slika5}). 

Let us first consider the vertices $\mathcal B_3(e)$ and $\mathcal B_3(o)$ of $T'$. By Lemma \ref{paj}(e), if $\mathcal B_3$ occurs for $k$, then  $\mathcal B_1$ or $\mathcal B_2$ occurs for $k'$ and $k'=k+1$. This explains why there are arcs from 
$\mathcal B_3(e)$ to  $\mathcal B_1(o), \mathcal B_2(o)$, and  from 
$\mathcal B_3(o)$ to  $\mathcal B_1(e), \mathcal B_2(e)$. There 
are no other arcs with starting vertices $\mathcal B_3(e)$ or $\mathcal B_3(o)$.

Suppose that  $\mathcal B_1(o)$ or $\mathcal B_1(e)$ occurs for $k$. By Lemma \ref{paj}(b) and (c), if $\mathcal B_1(o)$ occurs for $k$, then $\mathcal B_3$ occurs for $k'$ and $k'$ is even or $\mathcal B_5$ or $\mathcal B_6$ occurs for $k'$ and $k'$ is odd. The situation is similar if $\mathcal B_1(e)$ occurs for $k$. This explains the arcs $\mathcal B_1(o)\rightarrow \mathcal S$ and $\mathcal B_1(e)\rightarrow \mathcal R.$ There 
are no other arcs with the starting vertices $\mathcal B_1(o)$ or $\mathcal B_1(e)$.

Suppose that  $\mathcal B_2(o)$ or  $\mathcal B_2(e)$ occurs for $k$. By Lemma \ref{paj}(a), $\mathcal B_1,\mathcal B_2,\mathcal B_3,\mathcal B_5,\mathcal B_6$ do not occur for $k'$. Therefore $\mathcal B_4$ or $\mathcal B_7$ occurs for $k'$. We therefore have  arcs from $\mathcal B_2(o)$ and  $\mathcal B_2(e)$  to $\mathcal B_4(e),\mathcal B_4(o),\mathcal B_7(e)$ and $\mathcal B_7(o).$

Suppose that  $\mathcal B_6(e),\mathcal B_6(o),\mathcal B_7(e)$ or $\mathcal B_7(o)$ occurs for $k$.  By Lemma \ref{paj}(b) we have $\mathcal B_1,\mathcal B_2,\mathcal B_4,\mathcal B_7$ do not occur for $k'$. Hence, $\mathcal B_3,\mathcal B_5$ or $\mathcal B_6$ occurs for $k'$. Moreover, by Lemma \ref{paj}(d),  if $\mathcal B_6(o)$ or  $\mathcal B_7(o)$ occurs for $k$, then 
$\mathcal B_3(o), \mathcal B_5(e)$ or $\mathcal B_6(e)$ occurs for $k'$, and if $\mathcal B_6(e)$ or  $\mathcal B_7(e)$ occurs for $k$, then 
$\mathcal B_3(e), \mathcal B_5(o)$ or $\mathcal B_6(o)$ occurs for $k'$. This explains the arcs $\mathcal B_6(o), \mathcal B_7(o)\rightarrow \mathcal R$, and $\mathcal B_6(e),\mathcal B_7(e)\rightarrow \mathcal S$.
There are no other arcs with starting vertices $\mathcal B_6(e),\mathcal B_6(o),\mathcal B_7(e)$ or $\mathcal B_7(o)$.

Suppose that  $\mathcal B_4(e)$ or  $\mathcal B_4(o)$ occurs for $k$. By Lemma  \ref{paj}(e), $\mathcal B_1$ or $\mathcal B_2$ occurs for $k'$ and, if $k$ is even, then $k'$ is odd and vice versa. This explains the arcs 
 $\mathcal B_4(e)\rightarrow\mathcal B_1(o)$, $\mathcal B_4(e)\rightarrow\mathcal B_2(o)$,
$\mathcal B_4(o)\rightarrow\mathcal B_1(e)$ and $\mathcal B_4(o)\rightarrow\mathcal B_2(e).$

Finally, consider the vertices $\mathcal B_5(o)$ or $\mathcal B_5(e)$ that occur for $k$. By Lemma \ref{paj}(a), $\mathcal B_1,\mathcal B_2,\mathcal B_3,\mathcal B_5,\mathcal B_6$ do not occur for $k'$. Hence, $\mathcal B_4$ or $\mathcal B_7$ occurs for $k'$. Therefore there are arcs from  $\mathcal B_5(o)$ and $\mathcal B_5(e)$ to $ \mathcal B_4(e),\mathcal B_4(o),\mathcal B_7(e)$ and $\mathcal B_7(o).$ 

This gives a complete description of 
 graph $T'$  (shown in Fig.~\ref{slika5}).

Let $\mathcal P$ be a path cover of $P_3\Box P_n$ and $\mathcal E(\mathcal P)$ 
be the set of endvertices of paths in $\mathcal P$. Assume that 
 $\mathcal E(\mathcal P)=\{(3,k_1),\ldots,(3,k_r)\}$ and that $k_1<\ldots<k_r$. 
 By Lemma \ref{moznih19} each vertex of $ \mathcal E(\mathcal P)$ corresponds to a vertex 
 of $T'$ and the sequence $(3,k_1),\ldots,(3,k_r)$ corresponds to a directed path in 
 $T'$.
 We denote this directed path of $T'$ by $x_1,\ldots,x_r$. 
 
 Let us define $\mathcal Q=\{\mathcal B_1(e), \mathcal B_6(o), \mathcal B_7(o)\}$.  
 Then, by Lemma \ref{zacetnizadnji}, we have $x_1\in \mathcal S=\{\mathcal B_3(e), \mathcal B_5(o),\mathcal B_6(o)\}$ and $x_r\in \mathcal Q$. 

Suppose that $i$ and $j$ are integers such that $x_i,x_j\in \mathcal S\cup\mathcal  R$ and $x_\ell\notin \mathcal S\cup \mathcal R$ for $i<\ell<j$. 
 Define $X_{i}=\{x_i,\ldots, x_{j-1}\}$ and

\begin{quote} 
$a_i=|X_{i}\cap \{\mathcal B_1(e),\mathcal B_3(e),\mathcal B_5(e),\mathcal B_7(e)\}|$,
$ b_i=|X_i\cap \{\mathcal B_1(o),\mathcal B_3(o),\mathcal B_5(o),\mathcal B_7(o)\}|$,\\
$c_i=|X_i\cap \{\mathcal B_2(e),\mathcal B_4(e),\mathcal B_6(e)\}|$, 
$d_i=|X_i\cap \{\mathcal B_2(o),\mathcal B_4(o),\mathcal B_6(o)\}|$.
\end{quote}
 We will prove  the following  claims.\\


\noindent {\it Claim 1:}  If  $x_i,x_j\in \mathcal S$ and $x_\ell\notin \mathcal S\cup\mathcal  R$ for $i<\ell<j$  then  $a_i+2c_i=b_i+2d_i$.  \\
{\it Claim 2:}  If  $x_i,x_j\in \mathcal R$ and  $x_\ell\notin \mathcal S\cup \mathcal R$ for  $i<\ell<j$, then  $a_i+2c_i=b_i+2d_i$.  \\
{\it Claim 3:} If  $x_i\in \mathcal S,x_j\in \mathcal R$  and $x_\ell\notin \mathcal S\cup \mathcal R$ for $i<\ell<j$, then  $a_i+2c_i+2=b_i+2d_i$. \\
{\it Claim 4:} If  $x_i\in \mathcal R,x_j\in \mathcal S$ and $x_\ell\notin \mathcal S\cup \mathcal R$ for $i<\ell<j$, then  $a_i+2c_i=b_i+2d_i+2$. \\

We will prove the above claims only in the case when  $x_i= \mathcal B_3(e)$;  if $x_i=\mathcal B_5(o) $ or  $x_i=\mathcal B_6(o) $ the proof is similar. 
Suppose therefore that  $x_i= \mathcal B_3(e)$ (see graph $T'$ in Fig.~\ref{slika5}). Then we have either $x_{i+1}=\mathcal B_1(o)$ or $x_{i+1}=\mathcal B_2(o)$.
If $x_{i+1}=\mathcal B_1(o)$,  then $x_{i+2}\in \mathcal S$  and we have case (1) below, otherwise  $x_{i+1}=\mathcal B_2(o)$ and we have case (2) below
\begin{itemize}
\item[(1)] $\mathcal B_3(e) \rightarrow  \mathcal B_1(o) \rightarrow \mathcal S.$
\item[(2)] $\mathcal B_3(e) \rightarrow  \mathcal B_2(o) \rightarrow\underbrace{\mathcal B_4(e) \rightarrow \mathcal B_2(o)\rightarrow \ldots \rightarrow  \mathcal B_4(e) \rightarrow \mathcal B_2(o)}_{\text{some number (possibly 0) of repetitions of } \; \mathcal B_4(e) \rightarrow \mathcal B_2(o)}$ and then one of the following possibilities occurs:

\begin{itemize}
\item[(2.1)] $\mathcal B_4(e)\rightarrow\mathcal B_1(o) \rightarrow \mathcal S.$ 
\item[(2.2)]  $\mathcal B_7(e)\rightarrow \mathcal S.$ 
\item[(2.3)] $\mathcal B_7(o)\rightarrow \mathcal  R.$
\item[(2.4)] $\mathcal B_4(o)\rightarrow \underbrace{\mathcal B_2(e) \rightarrow \mathcal B_4(o)\rightarrow \ldots  \rightarrow \mathcal B_2(e) \rightarrow \mathcal B_4(o).}_{\text{some number (possibly 0) repetitions of }  \mathcal B_2(e) \rightarrow \mathcal B_4(o)}$ In this case we continue as follows: 
\begin{itemize}
\item[(2.4.1)]  $\mathcal B_1(e)\rightarrow \mathcal R.$ 
\item[(2.4.2)] $\mathcal B_2(e)\rightarrow \mathcal B_7(o)\rightarrow \mathcal R.$
\item[(2.4.3)] $\mathcal B_2(e)\rightarrow\mathcal B_7(e) \rightarrow \mathcal S.$ 
\item[(2.4.4)] $\mathcal B_2(e)\rightarrow\mathcal B_4(e)\rightarrow \mathcal B_1(o)\rightarrow \mathcal S.$ 
\item[(2.4.5)] $\mathcal B_2(e)\rightarrow\mathcal B_4(e)\rightarrow \mathcal B_2(o)$ 
and we are back to case (2).

\end{itemize}
\end{itemize}
\end{itemize}

In case (1) $\mathcal B_3(e)$ contributes 1 to $a_i+2c_i$ and $\mathcal B_1(o)$ contributes 1 to $b_i+2d_i$.  Hence, Claim 1 is true in this case.  In case (2), $\mathcal B_3(e)$ contributes 1 to $a_i+2c_i$ and $\mathcal B_2(o)$ contributes 2 to $b_i+2d_i$. 
Note that each repetition of  $\mathcal B_4(e) \rightarrow \mathcal B_2(o)$ contributes 
2 to $a_i+2c_i$ and 2 to $b_i+2d_i$. After  we have arrived at the last occurence  of 
$\mathcal B_2(o)$ in $X_i$, we   analyse the contribution of the remaining vertices to 
$a_i+2c_i$ and to $b_i+2d_i$. 
We claim that for the subcases (2.1) to (2.4) the contribution  of the remaining vertices  to $a_i+2c_i$ is
\begin{itemize}
\item[(a)]   one more than  the contribution to $b_i+2d_i$, if the path ends in $\mathcal S$, and
\item  [(b)]    one less than the contribution to $b_i+2d_i$, if the path ends in $\mathcal R$.
\end{itemize}
Observe that the above claim implies Claim 1 and Claim 3, hence it remains to prove (a) and (b).

In  case (2.1), $\mathcal B_4(e)$ contributes 2 to $a_i+2c_i$ and $\mathcal B_1(o)$ contributes 1 to $b_i+2d_i$, and in case (2.2),  $\mathcal B_7(e)$ contributes 1 to $a_i+2c_i$. In both cases we find that (a) is true. In  case (2.3), $\mathcal B_7(o)$ contributes 1 to $b_i+2d_i$ and therefore (b) is true.  

In case (2.4) $\mathcal B_4(o)$ contributes 2 to $a_i+2c_i$ and each repetition of $\mathcal B_2(e) \rightarrow \mathcal B_4(o)$ contributes 2 to $a_i+2c_i$ and 2 to $b_i+2d_i$. Hence,  the contribution of vertices that appear before  the last occurrence of $\mathcal B_4(o)$ (including the last $\mathcal B_4(o)$), to $a_i+2c_i$  is three less than the contribution  to $b_i+2d_i$. We claim that for the subcases (2.4.1) to (2.4.5) the contribution  of the remaining vertices  to $a_i+2c_i$ is
\begin{itemize}
\item[(c)]  three more than  the contribution to $b_i+2d_i$, if the path ends in $\mathcal S$, and
\item  [(d)]  one more than the contribution to $b_i+2d_i$, if the path ends in $\mathcal R$.
\end{itemize}
Note that (c) and (d) imply (a) and (b), hence it remains to prove (c) and (d). 

In case (2.4.1), $\mathcal B_1(e)$ contributes 1 to $a_i+2c_i$. In case (2.4.2), $\mathcal B_2(e)$ contributes 2 to $a_i+2c_i$ and  $\mathcal B_7(o)$  contributes 1 to $b_i+2d_i$. In both cases we find that (d) is true. In case (2.4.3), $\mathcal B_2(e)$ contributes 2  and  $\mathcal B_7(e)$ contributes 1 to $a_i+2c_i$. In case (2.4.4), $\mathcal B_2(e)$ and $\mathcal B_4(e)$ contribute 4 to $a_i+2c_i$ and  $\mathcal B_1(o)$ contributes 1 to $b_i+2d_i$. In these two cases, (c) is true. 

In case (2.4.5)  we traverse the path (considering the whole path of case (2.4)) 
$$\mathcal B_4(o)\rightarrow  \underbrace{\mathcal B_2(e) \rightarrow \mathcal B_4(o)\rightarrow \ldots  \rightarrow \mathcal B_2(e) \rightarrow \mathcal B_4(o)}_{\text{some number (possibly 0) repetitions of }  \mathcal B_2(e) \rightarrow \mathcal B_4(o)}\rightarrow\mathcal B_2(e)\rightarrow\mathcal B_4(e)\rightarrow \mathcal B_2(o).$$
This path contributes equal to  $a_i+2c_i$ as to $b_i+2d_i$ and it can be traversed several times until eventually  we come to the last occurrence of $B_2(o)$, after which the path $X_i$ ends with one of the cases (2.1),(2.2),(2.3) or (2.4.1) to (2.4.4) which we have dealt with above. This proves Claims 1-4.  

Let   $I=\{i\, |\, x_i\in \mathcal S\cup \mathcal R\}$.
Note that (by an inductive  argument)  Claim 1  remains true also if we omit the condition $x_\ell\notin \mathcal S\cup \mathcal R$, hence if $x_i,x_j\in \mathcal S$ then $$\sum_{\substack{k\in I\\i\leq k< j}}(a_k+2c_k)=\sum_{\substack{k\in I\\i\leq k< j}}(b_k+2d_k).$$ Similarly,  Claims 3 remains true if we omit the condition $x_\ell\notin \mathcal S\cup \mathcal R$. Thus if $x_i\in \mathcal S,x_j\in \mathcal R$ then 
\begin{equation}
\label{SR}
\sum_{\substack{k\in I\\i\leq k< j}}(a_k+2c_k)+2=\sum_{\substack{k\in I\\i\leq k< j}}(b_k+2d_k).
\end{equation}

Note that $x_r\in \mathcal Q=\{\mathcal B_1(e), \mathcal B_6(o), \mathcal B_7(o)\}$, and that  from a vertex of $\mathcal Q$ it is only possible to 
continue (in graph $T$) to a vertex of $\mathcal R$ (see Fig.~\ref{slika5}); so  from $x_r$ it is also only possible to continue to a vertex of $\mathcal R$.  For $i=1$ and $j=r=\max\{i\,|\,i\in I\}$   the left side of \eqref{SR} is equal to $|\mathcal V_e|+2|\overline{\mathcal V}_e|+2$  and the right side of \eqref{SR} is equal to $|\mathcal V_o|+2|\overline{\mathcal V}_o|$. Hence we have  
$|\mathcal V_e|+2|\overline{\mathcal V}_e|+2=|\mathcal V_o|+2|\overline{\mathcal V}_o|$, which completes the proof of Lemma \ref{lemarob}.

\end{document}